\newcommand\RR{{\mathbb R}}
\newcommand{\cB}{\mathcal B}
\newcommand\SetOf[2]{\left\{\left.#1\vphantom{#2}\ \right|\ #2\vphantom{#1}\right\}}
\newcommand\smallSetOf[2]{\{{#1}\,|\,{#2}\}}
\DeclareMathOperator{\conv}{conv}
\DeclareMathOperator{\rank}{rk}
\DeclareMathOperator{\size}{\#}
\DeclareMathOperator{\dist}{dist}
\theoremstyle{plain}
    \newtheorem{theorem}{Theorem}
    \newtheorem{corollary}[theorem]{Corollary}
    \newtheorem{lemma}[theorem]{Lemma}
    \newtheorem{proposition}[theorem]{Proposition}
\theoremstyle{definition}
    \newtheorem{remark}[theorem]{Remark}
    \newtheorem{example}[theorem]{Example}
    \newtheorem{problem}{Problem}
    \newtheorem{question}{Question}
\definecolor{myred}{rgb}{0.7,0.0,0.2}
\definecolor{myblack}{rgb}{0,0,0}
\definecolor{myred2}{rgb}{0.7,0.0,0.2}
\definecolor{myblue}{rgb}{0,0.2,0.7}
\definecolor{myred3}{rgb}{0.7,0,0.2}
\definecolor{myred4}{rgb}{0,0,0}
\title{Reconstructibility of matroid polytopes}
\author{Guillermo Pineda-Villavicencio}
\author{Benjamin Schr\"oter}
\address{
   School of Information Technology, Deakin University, Geelong, Australia
   and
   Centre for Informatics and Applied Optimisation, Federation University, Australia
}
\email{work@guillermo.com.au}
\address{
  Department of Mathematics, KTH Royal Institute of Technology, Stockholm, Sweden
}
\email{schrot@kth.se}
\subjclass[2020]{52B11 (05C60)}
\keywords{basis exchange graphs, polytope reconstruction, matroid polytopes, hypersimplices, cubical polytopes}
\begin{document}

\begin{abstract}
We specify what is meant for a polytope to be reconstructible from its graph or dual graph. And we introduce the problem of class reconstructibility, i.e., the face lattice of the polytope can be determined from the (dual) graph within a given class. 
We provide examples of cubical polytopes that are not reconstructible from their dual graphs.
Furthermore, we show that matroid (base) polytopes are not reconstructible from their graphs and not class reconstructible from their dual graphs;  our counterexamples include hypersimplices.
Additionally, we prove that matroid polytopes are class reconstructible from their graphs, and we present a $O(n^3)$ algorithm that computes the vertices of a matroid polytope from its $n$-vertex graph. Moreover, our proof includes a characterisation of all matroids with isomorphic basis exchange graphs.  
\end{abstract}
\maketitle

\section{Overview}
\noindent This manuscript deals with several fundamental structures of discrete geometry, namely polytopes, their face structure, and in particular their graphs, as well as matroids. We mainly discuss whether the graph of a matroid polytope determines its matroid and the entire polytope.

Polytopes and polyhedral structures have found applications in a growing number of areas, ranging from operations research, machine learning and statistics to topology and algebraic geometry.  We refer to \cite{GoodmanORourkeToth:2018} for a significant number of examples.  
Most prominently, polytopes arise as sets of feasible solutions of linear programs \cite[Ch.\,49]{GoodmanORourkeToth:2018}, and more generally, of convex programs. 

In many of the above applications, data or solution spaces are high-dimensional convex polytopes or can be approximated by these. Given that high-dimensional data is often hard to analyse or visualise, most methodologies include dimension reduction techniques (see \cite{burges2010dimension}), which try to embed the data in a low dimensional space. Unfortunately, dimension reductions in many cases carry a subsequent loss of information. This paper partly addresses this phenomenon. We investigate polytope classes whose combinatorial structure can be entirely reconstructed from $1$-dimensional objects, their graphs, which are well studied and often easier to understand. With this knowledge and relevant reconstruction algorithms, combinatorial questions on high-dimensional polytopes can be answered by examining their graphs. Graph reconstruction means algorithmically  that, given the dimension of a polytope, and either the graph or dual graph of the polytope, it can be decided whether a given induced subgraph is the graph of a facet.
Furthermore, the relation between polytopes and their graphs is central for the simplex method, a standard algorithm in linear programming. It is an open problem if there exist a pivot rule such that the method becomes strongly polynomial and thus solves Smale's 9th problem.

We focus our attention on matroid (base) polytopes and their graphs. Matroids themselves are a (combinatorial) generalisation of graphs and (linear) independence in vector spaces. They have many applications in discrete and algorithmic geometry; see \cite{Oxley:2011} and \cite{White:1986} for further details.
Matroid polytopes are one of many cryptomorphic ways to encode a matroid. Matroid polytopes, their regular subdivisions, and their normal fans play a fundamental role in tropical geometry; see \cite{MaclaganSturmfels:2015} and \cite{Joswig:2020} for further details.
It is remarkable that the three-dimensional faces of matroid subdivisions determine tropical linear spaces entirely \cite[Theorem 14]{OlartePanizzutSchroeter}. Here we discuss the related case of matroid polytopes and their one-dimensional faces, i.e., their vertex-edge graphs.
These graphs are meaningful as their edges correspond to element exchanges between bases, i.e., maximal independent sets of a matroid.
They are studied actively in connection with White's 1980 conjecture about symmetric basis exchanges \cite{White:1980}. Besides,  
 graphs of matroid polytopes have been characterised by Maurer \cite{Maurer:1973}. This characterisation is crucial to our work.

An important subclass of matroid polytopes, which we investigate further, are hypersimplices; these are the (base) polytopes of uniform matroids. 
Their graphs are known as Johnson graphs and play an important role in the theory of error correcting codes; see for example \cite{MacWilliamsSloane}.
The term ``hypersimplex'' was introduced in \cite{GabGelLos75}, but these polytopes appeared much earlier, for example in the work of Coxeter \cite[p.\,163]{Cox73} where he uses a construction of Stott \cite[p.\,15]{Stott:1910}. Since then, hypersimplices have featured in a number of applications, including combinatorial optimisation. 
Matroid polytopes, and in particular hypersimplices, naturally appear as moment polytopes of Grassmannians under torus actions \cite{GelfandEtAl:1987}. This fact connects tropical algebraic geometry to high energy physics and scattering amplitudes. Here positive geometries,  hypersimplices, and other positroids are central; see \cite{Postnikov:2018} and \cite{ABCGPT:2016}. Hypersimplices are also weight polytopes of the fundamental representations of the general linear group.

This manuscript is organised as follows. Beside this general introduction, it consists of three further sections. In Section~\ref{sec:intro-rec}, we introduce the classical problem of reconstructing a polytope from its (dual) graph (Problem~\ref{prob:rec}), or more generally, from its $k$-skeleton. We also introduce a variation of the classical reconstruction in Problem~\ref{prob:class-rec}, which we call \emph{class reconstruction problem}. This new problem is inspired by a conjecture of Joswig,   \cite[Conjecture 3.1]{Joswig:2000}, stating that a cubical polytope is reconstructible from its dual graph. We examine this conjecture in this section and show that it is false in the sense of \cref{prob:rec} by providing a counterexample in Example~\ref{ex:dualcubical}. However, it remains open for the problem of class reconstructibility of cubical polytopes; see \cref{prob:class-rec}. In the remaining part of the paper, we  focus on the reconstructibility of matroid (base) polytopes from their graphs.

In Section~\ref{sec:BasisExchangeGraphs}, we exploit Maurer's characterisation \cite{Maurer:1973} of basis exchange graphs, i.e., the vertex-edge graphs of matroid polytopes, to obtain Theorem~\ref{thm:recmatroids}, which characterises matroids with isomorphic basis exchange graphs. This part does not use any polytope theory. In Section~\ref{sec:RecMatroidPolytopes}, we present Algorithm~\ref{algo:label}, which, given a graph on $n$ nodes,  computes the bases of a matroid, or equivalently the vertices of its matroid polytope,  whenever the graph is the basis exchange graph of a matroid.  Furthermore, Algorithm~\ref{algo:label} returns \emph{false} if the graph is not that of a matroid polytope. This algorithm terminates after at most $O(n^3)$ steps.
Combining \cref{algo:label} and Theorem~\ref{thm:recmatroids}, we derive that matroid polytopes are class reconstructible from their graphs (Theorem~\ref{thm:rec-matroid-polys}) and that they are neither class reconstructible from their dual graphs (Corollary~\ref{cor:rec-dual-matroid-polys}) nor reconstructible from their graphs  (Proposition~\ref{prop:counterexample}).

\section{The problem of polytope reconstruction}\label{sec:intro-rec}
\noindent 
A (convex) {\it $d$-polytope} is the convex hull of finitely many points whose affine hull is $d$-dimensional. The $k$-dimensional {\it skeleton}, or $k$-skeleton,  of a polytope~$P$ is the set of all its faces of dimension at most~$k$. Its $1$-skeleton is its {\it (vertex-edge) graph} $G(P)$. The {\it combinatorial structure} of a polytope is given by its {\it face lattice}, a partial ordering of the faces by inclusion. Two polytopes are \emph{combinatorially equivalent} if there is an inclusion-preserving bijection between their face lattices.
In this article,  we do not distinguish between combinatorially equivalent polytopes.

For a polytope $P$ that contains the origin in its relative interior, the polar polyhedron of $P$ is  defined as 
\[
\SetOf{y\in \RR^d}{x\cdot y\le 1 \text{ for all $x$ in $P$}}.
\]
This polyhedron might be unbounded, which  is the case whenever $P$ is not full dimensional. We intersect the polar polyhedron with the affine hull of $P$. This intersection is a polytope~$P^*$ called the {\it polar polytope} of $P$. If $P$ does not contain the origin, we translate the polytope so that it does to obtain the polar polytope. Note that, as long as the origin is in the relative interior of $P$, translating the polytope $P$ changes the geometry of $P^*$, but not the face structure. It is also the case that the face lattice of $P^*$ is the inclusion-reversed face lattice of $P$. In particular, the vertices of $P^*$ correspond to the \emph{facets} of $P$, i.e., the codimension-one faces of $P$.
The \emph{dual graph} of a polytope $P$ is the graph of the polar polytope, or equivalently, the graph  on the set of facets of $P$ where two facets are adjacent in the dual graph if they share a \emph{ridge}, a codimension-two face. 

The classical graph reconstruction problem can be stated as follows.
\begin{problem}[Reconstruction problem]\label{prob:rec} Given a parameter $d$, and the (dual) graph of a $d$-polytope, determine the face lattice of the polytope.  
\end{problem}

Problem~\ref{prob:rec} requires that there is exactly one $d$-polytope whose (dual) graph is the given graph. In practice, it is unclear how to obtain the face lattice from a graph, even if there is only one $d$-polytope with this graph.
Theoretically, one can enumerate the face lattices of all $d$-polytopes with a given number of vertices \cite[p.\,91]{Gruenbaum:2003}. Hence, theoretically, we are able to decide whether a polytope is reconstructible from its graph or not, and if it is, we can reconstruct the face lattice. However, Gr\"unbaum's approach is impractical \cite[p.\,96b]{Gruenbaum:2003} as it relies on Tarski's decidability of expressions in first-order logic over the reals. 

The reconstruction problem can plainly be  extended to skeleta of polytopes; see Gr\"unbaum's book on convex polytopes \cite[Chaper 12]{Gruenbaum:2003} or Bayer's survey on the topic \cite{Bayer:2018}. For example every $d$-polytope is reconstructible from its $(d-2)$-skeleton \cite[Theorem~12.3.1]{Gruenbaum:2003}, which means that you can construct the full face lattice from the $(d-2)$-skeleton.  But there are $d$-polytopes with isomorphic $(d-3)$-skeleton, e.g., a bipyramid over a $(d-1)$-simplex and the pyramid over a bipyramid over a $(d-2)$-simplex have equivalent $(d-3)$-skeleta.

It is well known that the graph of a polytope does not in general determine its dimension.
For example, the $k$-skeleton of the cyclic $d$-polytope on $n$ vertices \cite[page~11]{Ziegler:1995} agrees with the $k$-skeleton of the $(n-1)$-simplex, for every natural numbers $n$, $d$ and $k$ satisfying $n>d\geq 2k+2 \geq 4$; see \cite[Chapter 12]{Gruenbaum:2003}; in particular, the graph of the cyclic $4$-polytope on $n>4$ vertices is the complete graph on $n$ vertices.

It is a famous result of Blind and Mani \cite{BlindMani:1987}, and later Kalai \cite{Kalai:1988}, that every {\it simple $d$-polytope}, a polytope in which every vertex is incident to precisely $d$ edges, can be reconstructed from its graph. That is, the graph of a simple $d$-polytope $P$ determines the entire face structure of $P$ when the dimension of $P$ is given. Other recent reconstruction developments can be found in \cite{DoolittleEtAl:2017}, \cite{PinedaEtAl:2017}, and \cite[Section~19.5]{GoodmanORourkeToth:2018}.

\begin{figure}[t!]
	\caption{Schlegel projections of the $4$-polytopes $Q$ and its polar $Q^*$ from Example~\ref{ex:dualcubical}. The polar polytope $Q^*$ has the same dual graph as the $4$-cube. } 
\label{fig:p-cube-dual-graph}
\begin{subfigure}[t]{0.45\textwidth}
    \centering
    \subcaption*{A Schlegel projection of $Q$.}

\begin{tikzpicture}[x  = {(-0.3cm,-0.7cm)},
                    y  = {(-0.42cm,-0.4cm)},
                    z  = {(0.85cm,-0.4cm)},
                    scale = .77,
                    color = {lightgray}]

  \tikzstyle{pointcolor_x} = [fill=myred]
  \tikzstyle{pointcolor_p} = [fill=black]

  \coordinate (v0_p) at (-2, -2, -2);
  \coordinate (v1_p) at (0.5, -0.8, 0.2);
  \coordinate (v2_p) at (-2, -2, 4);
  \coordinate (v3_p) at (4, -2, -2);
  \coordinate (v4_p) at (0.3, -1.3, 0.2);
  \coordinate (v5_p) at (3.5, -5, 4);
  \coordinate (v6_p) at (-2, 4, -2);
  \coordinate (v7_p) at (0.5, -3.2, 2.5);

  \tikzstyle{linestyle_x} = [preaction={draw=white, line cap=round, line width=2.5 pt}, line cap=round, line join=round, color=myred, line width=1 pt]
  \tikzstyle{linestyle_p} = [preaction={draw=white, line cap=round, line width=2.5 pt}, line cap=round, line join=round, color=black, ultra thick]

  \draw[color=white] (-6.6,0,0) -- (3.5,0,0);
  \draw[linestyle_p] (v0_p) -- (v2_p) -- (v6_p) -- cycle;
  \draw[linestyle_p] (v5_p) -- (v6_p);
  \draw[linestyle_p] (v2_p) -- (v5_p) -- (v3_p) -- (v6_p);
  \draw[linestyle_x] (v2_p) -- (v1_p) -- (v6_p);
  \draw[linestyle_x] (v2_p) -- (v4_p) -- (v6_p) (v0_p) -- (v4_p) -- (v1_p);
  \draw[linestyle_x] (v0_p) -- (v7_p) -- (v2_p) (v1_p) -- (v7_p) -- (v4_p);
  \draw[linestyle_x] (v3_p) -- (v1_p) -- (v5_p);
  \draw[linestyle_x] (v3_p) -- (v4_p);
  \draw[linestyle_x] (v3_p) -- (v7_p) -- (v5_p);
  \draw[linestyle_p] (v0_p) -- (v3_p) -- (v5_p) -- cycle;

  \fill[pointcolor_p] (v0_p) circle (4 pt);
  \fill[pointcolor_x] (v1_p) circle (3 pt);
  \fill[pointcolor_p] (v2_p) circle (4 pt);  
  \fill[pointcolor_p] (v3_p) circle (4 pt);
  \fill[pointcolor_x] (v4_p) circle (3 pt);
  \fill[pointcolor_p] (v5_p) circle (4 pt);
  \fill[pointcolor_p] (v6_p) circle (4 pt);
  \fill[pointcolor_x] (v7_p) circle (3 pt);
    
\end{tikzpicture}
\end{subfigure}
\begin{subfigure}[t]{0.45\textwidth}
    \centering
    \subcaption*{A Schlegel projection of $Q^*$.}

\begin{tikzpicture}[x  = {(-0.3cm,-1cm)},
                    y  = {(0.8cm,-0.40cm)},
                    z  = {(0.6cm,0.1cm)},
                    scale = 8,
                    color = {lightgray}]

  \tikzstyle{pointcolor_x} = [fill=myred]
  \tikzstyle{pointcolor_p} = [fill=black]
  
  \coordinate (v0_q) at (0, 0.408248, 0.144338);
  \coordinate (v1_q) at (0.101015, 0.174964, 0.061859);
  \coordinate (v2_q) at (0.215206, -0.124249, -0.0815821);
  \coordinate (v3_q) at (-0.176777, -0.204124, 0.144338);
  \coordinate (v4_q) at (-0.353553, -0.204124, 0.144338);
  \coordinate (v5_q) at (0.0196419, 0.306186, 0.144338);
  \coordinate (v6_q) at (-0.107216, -0.0247398, 0.0366838);
  \coordinate (v7_q) at (-0.0749553, -0.0247398, 0.0366838);
  \coordinate (v8_q) at (0.020001, 0.182147, -0.00259064);
  \coordinate (v9_q) at (-0.0104344, 0.14289, 0.0366838);
  \coordinate (v10_q) at (0, 0, -0.433013);
  \coordinate (v11_q) at (0.118445, -0.019423, -0.0642586);

  \draw[color=white] (0,0,0) -- (.28,0,.14);

  \tikzstyle{linestyle_x} = [preaction={draw=white, line cap=round, line width=2.5 pt}, line cap=round, line join=round, color=myred, line width=1 pt]
  \tikzstyle{linestyle_p} = [preaction={draw=white, line cap=round, line width=2.5 pt}, line cap=round, line join=round, color=black, ultra thick]

  \draw[linestyle_p] (v0_q) -- (v10_q) -- (v4_q) -- cycle;
  \draw[linestyle_p] (v0_q) -- (v1_q) -- (v2_q) -- (v10_q);  
  \draw[linestyle_x] (v10_q) -- (v11_q) -- (v2_q) (v11_q) -- (v1_q); 
  \draw[linestyle_x] (v0_q) -- (v8_q) -- (v10_q) (v8_q) -- (v11_q);
  \draw[linestyle_x] (v0_q) -- (v9_q) -- (v8_q);
  \draw[linestyle_x] (v9_q) -- (v7_q) -- (v11_q);
  \draw[linestyle_x] (v10_q) -- (v6_q) -- (v9_q) (v7_q) -- (v6_q) -- (v4_q);
  \draw[linestyle_x] (v5_q) -- (v9_q) -- (v11_q);  
  \draw[linestyle_x] (v7_q) -- (v3_q);
  \draw[linestyle_p] (v1_q) -- (v5_q);
  \draw[linestyle_p] (v0_q) -- (v5_q) -- (v3_q) -- (v4_q);
  \draw[linestyle_p] (v2_q) -- (v3_q);
  
  \fill[pointcolor_p] (v0_q) circle (0.4 pt);
  \fill[pointcolor_p] (v1_q) circle (0.4 pt);
  \fill[pointcolor_p] (v2_q) circle (0.4 pt);
  \fill[pointcolor_p] (v3_q) circle (0.4 pt);
  \fill[pointcolor_p] (v4_q) circle (0.4 pt);
  \fill[pointcolor_p] (v5_q) circle (0.4 pt);
  \fill[pointcolor_x] (v6_q) circle (0.3 pt);
  \fill[pointcolor_x] (v7_q) circle (0.3 pt);
  \fill[pointcolor_x] (v8_q) circle (0.3 pt);
  \fill[pointcolor_x] (v9_q) circle (0.3 pt);
  \fill[pointcolor_p] (v10_q) circle (0.4 pt);
  \fill[pointcolor_x] (v11_q) circle (0.3 pt);
 
 \end{tikzpicture}
\end{subfigure}
\end{figure}
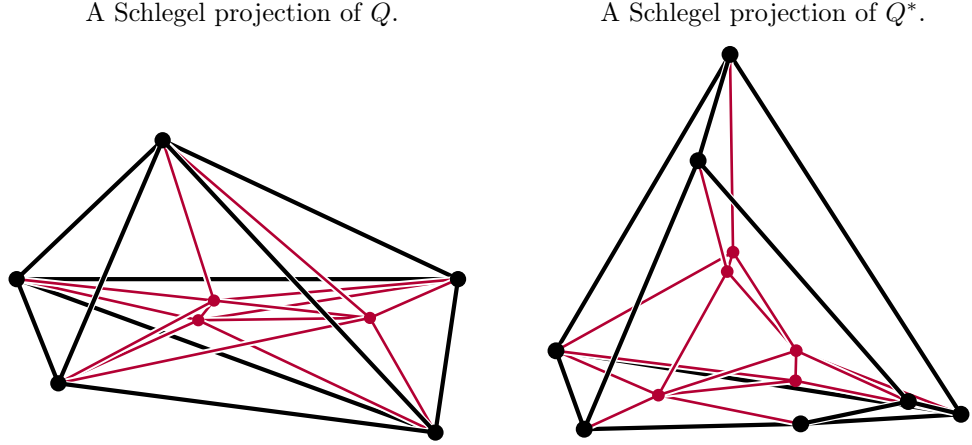
\setcounter{figure}{1}

An example of polytopes that are not determined by their graphs and their dimensions are \emph{cubical polytopes}.
These are polytopes in which every facet is a cube. Sanyal and Ziegler \cite[Corollary~ 3.8]{SanZie10} constructed combinatorially inequivalent, cubical $d$-polytopes on $n$ vertices with the same $\lfloor \tfrac{d}{2}\rfloor$-skeleton as the  $n$-cube.

Joswig \cite[Conjecture~3.1]{Joswig:2000} conjectured that cubical polytopes can be reconstructed from their dual graphs. Our first contribution is a counterexample (Example~\ref{ex:dualcubical}) to Joswig's conjecture in the strict sense of Problem~\ref{prob:rec}.
  
\begin{example}\label{ex:dualcubical} Consider the $4$-polytope $Q$ formed by the eight vertices:
\begin{align*}
	(-2, -2, -2, -2), && (-2, -2, -2,  4), && (-2, -2,  4, -2), && (-2,  4, -2, -2),\\
        ( 4,  4,  4, -2), && ( 4, -2, -2, -2), && ( 4, -5, -2,  4), && ( 1,-11, -1, 12)
\end{align*} 
The vertices are grouped such that nonadjacent vertices are on top of each other. The polytope $Q$ has the following twelve facets:
\begin{align*}
	      2x_2 +4x_3+ x_4&\geq -14&  x_1+2x_2+15x_3 &\geq -36\\
	 3x_1-3x_2 -4x_3-4x_4&\geq -8 & 14x_2+9x_4&\geq -46\\
	-2x_1+2x_2 -2x_3+ x_4&\geq -10& 3x_1+ x_2 &\geq -8\\
	  x_1- x_2  -x_3- x_4&\geq -2 & x_1 &\geq -2\\
	 -x_1-2x_2+ x_3-2x_4 &\geq -4 & x_3 &\geq -2\\
	-2x_1-2x_2+2x_3- x_4 &\geq -6 & x_4 &\geq -2\\
\end{align*}
Strictly speaking, each facet is obtained by intersecting the polytope $Q$ with the hyperplane derived by replacing the  inequality by the corresponding equality.   
It follows that the vertex-edge graph of $Q$ is isomorphic to the vertex-edge graph of a four dimensional cross-polytope, the polar of a $4$-cube. See additionally Figure~\ref{fig:p-cube-dual-graph}. As a consequence, the polar polytope $Q^*$ of $Q$ has a dual graph isomorphic to the dual graph of a $4$-cube. However, note that the polytope $Q^*$ has eight facets, each with six or seven vertices, and so $Q^*$ is noncubical.   
\end{example}

We have just shown the following.

\begin{proposition} Cubical polytopes are not reconstructible from their dual graph and dimension.
\end{proposition}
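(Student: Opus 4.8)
The plan is to obtain the proposition as an immediate consequence of Example~\ref{ex:dualcubical}. The underlying principle is that to refute reconstructibility in the strict sense of Problem~\ref{prob:rec} it suffices to produce two combinatorially inequivalent $d$-polytopes of a common dimension that share the same dual graph, at least one of which is cubical.

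First I would fix the benchmark cubical polytope. Recall that the polar of the $4$-cube is the $4$-dimensional cross-polytope, so the dual graph of the $4$-cube is precisely the vertex-edge graph of that cross-polytope (each of the eight vertices joined to all others except its antipode). The $4$-cube is cubical and $4$-dimensional, and it supplies one realisation of this dual graph.

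Next I would bring in the polytope $Q$ of Example~\ref{ex:dualcubical}. Its vertex-edge graph is isomorphic to the graph of the $4$-cross-polytope, and since the dual graph of $Q^*$ is by definition $G(Q)$, the polar $Q^*$ is a second $4$-polytope whose dual graph is isomorphic to the dual graph of the $4$-cube. It then remains to check that $Q^*$ and the $4$-cube are combinatorially distinct: every facet of the $4$-cube is a $3$-cube with eight vertices, whereas each of the eight facets of $Q^*$ has, as recorded in the example, only six or seven vertices. Hence $Q^*$ is noncubical, and in particular not combinatorially equivalent to the $4$-cube. The dimension $4$ together with the common dual graph is therefore realised by two inequivalent polytopes, so the face lattice is not determined by the dual graph and the dimension --- which is exactly the assertion.

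The deduction itself is routine; the genuine work sits inside Example~\ref{ex:dualcubical}, which I take as given. The one point I would still want to be comfortable with is the combinatorial claim that anchors the argument: that the eight listed points of $Q$ are in convex position with exactly the four antipodal nonadjacencies (so that $G(Q)$ really is the cross-polytope graph), and that the polar $Q^*$ has the stated facet sizes of six or seven. Both are finite verifications --- a facet enumeration of $Q$ and a per-facet vertex count of $Q^*$ --- that a convex-hull computation settles, and which I would confirm rather than reprove here.
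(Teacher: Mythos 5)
Your proposal is correct and matches the paper's argument exactly: the paper's entire proof is the sentence ``We have just shown the following'' immediately after Example~\ref{ex:dualcubical}, i.e.\ it rests on precisely the deduction you spell out (the $4$-cube and the noncubical $Q^*$ are inequivalent $4$-polytopes sharing a dual graph). Your version merely makes the routine logical step explicit, and correctly identifies that the substantive content lives in the convex-hull verification inside the example.
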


Example~\ref{ex:dualcubical} is our motivation for the following definition. We say that a polytope within a class of polytopes is \emph{class reconstructible} from its (dual) graph if first no other polytope in the class has the same (dual) graph, and second the face lattice of the  polytope can be reconstructed from its (dual) graph.  

\begin{problem}[Class reconstruction problem]\label{prob:class-rec} Given a class of polytopes, a parameter $d$, and the (dual) graph of a $d$-polytope in the class, determine the face lattice of the polytope.  
\end{problem}
 To the best of our knowledge, it is still open whether Joswig's conjecture is true in the sense of Problem~\ref{prob:class-rec}; that is, that cubical $d$-polytopes are class reconstructible from their dual graphs. There are partial results in that direction.
 Joswig proved that capped cubical polytopes, also known as stacked cubical polytopes, are reconstructible from their dual graphs and dimension within the class of cubical polytopes; see \cite[Theorem 3.7]{Joswig:2000}. Babson, Finschi and Fukuda showed in \cite{BabsonFinschiFukuda:2001} that cubical zonotopes are class reconstructible.
 
 We discuss Problem~\ref{prob:rec} and Problem~\ref{prob:class-rec} in detail for the class of matroid polytopes in Section~\ref{sec:RecMatroidPolytopes}. In the next section,  we introduce matroids and their polytopes,  and investigate properties of graphs of matroid polytopes. 

\section{Basis Exchange Graphs of Matroids}\label{sec:BasisExchangeGraphs}
\noindent
For the rest of this paper, we focus on matroids and their polytopes. A \emph{matroid}~$M$ is a nonempty collection $\cB$ of subsets of a finite set $E$, called  \emph{bases}, that satisfies the symmetric bases exchange property:
\begin{center}
	For every pair of bases $A,B\in\cB$ and each element $a\in A-B$ there exists an element $b\in B$ such that $A-a+b$ and $B+a-b$ are contained in $\cB$.
\end{center}
We use $\pm$ to denote inclusion or exclusion of elements from a set.
There are many ways to define matroids  \cite[Chapter~1]{Oxley:2011} and \cite[Appendix by Thomas Brylawski]{White:1986} for further cryptomorphisms. The finite set $E$ is the \emph{ground set} of the matroid, and two matroids are isomorphic if there is a bijection between their ground sets that maps bases to bases.

From now on, when the ground set $E$ of the matroid is not specified, we assume it is the set $[n]=\{1,2,\ldots,n\}$, where $n=\#E$.
For every basis $B\in \cB$, the \textit{indicator vector} $e_{B}\in \mathbb R^{n}$  is defined as $e_{B}=\sum_{i\in B}e_{i}$, where $e_{i}$ is the standard vector with a one in the $i$-coordinate. The {\it matroid (base) polytope} $P(M)$ associated with the matroid $M$ is the convex polytope
\begin{equation}
\label{eq:matroid-polytope}
    P(M)\coloneqq \conv\SetOf{e_B}{B\in\cB}.
\end{equation}

As mentioned above, one goal of this paper is to prove that matroid polytopes are class reconstructible from their graphs. 
In contrast to Problem~\ref{prob:rec} and Problem~\ref{prob:class-rec} where the input graph is understood to be the graph of a polytope, we do not make this requirement. Furthermore, we  do not even need the dimension $d$ as an input parameter. Given a graph, Algorithm~\ref{algo:label} in Section~\ref{sec:RecMatroidPolytopes} decides whether or not the graph is the graph of a matroid polytope. If it is, it returns a collection of bases of a matroid that correspond to the vertices of the unique matroid polytope with that graph; if is not, it returns \emph{false}.  

It is convenient to define the graph $G(M)$ of a matroid polytope $P(M)$ without resorting to the realisation of the polytope. The graph $G(M)$ can be defined on the set of bases $\cB$ of a matroid $M$. Two nodes $A,B\in\cB$ are adjacent if and only if $\size(A-B)=1$. We refer to $G(M)$ as the \emph{labelled basis exchange graph} whenever its nodes are labelled by the matroid bases and as the \emph{abstract basis exchange graph} if the labelling is unknown. However, we often identify nodes with their labels if they are given.

In this section, we determine under which conditions it is possible to read off the matroid~$M$ from the abstract basis exchange graph. This amounts to finding a node labelling by bases of $M$.
Section~\ref{sec:RecMatroidPolytopes} presents an algorithm that, given an abstract graph, computes these labels and therefore a matroid.

Let $M$ be a matroid on $E$ with bases $\cB$. The \textit{dual matroid} of $M$ is a matroid $M^{*}$ on the same ground set and with bases $\cB^{*} =\{E-B:B\in \cB\}$. A {\it loop} of a matroid is an element of the ground set that is contained in no basis and a {\it coloop} is an element of the ground set that is contained in every basis. 	Clearly, some matroids lead to isomorphic basis exchange graphs.

\newpage

\begin{lemma}\label{lem:samegraph} Two matroids $M$ and $M'$ have isomorphic basis exchange graphs if
   \begin{enumerate}
      \item \label{item:iso} $M'$ is isomorphic to $M$,
      \item \label{item:dual} $M$ is the dual of $M'$,
      \item \label{item:loops} $M'$ equals $M$ up to loops and coloops.
   \end{enumerate}
\end{lemma}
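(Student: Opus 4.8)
The plan is to show each of the three conditions separately produces an isomorphism of basis exchange graphs. The overall strategy is elementary: in each case I exhibit an explicit bijection between the bases of $M$ and the bases of $M'$ and verify that it preserves the adjacency relation $\size(A-B)=1$.

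For condition~\eqref{item:iso}, suppose $\varphi\colon E\to E'$ is the bijection of ground sets witnessing the isomorphism, so that $B$ is a basis of $M$ if and only if $\varphi(B)$ is a basis of $M'$. The induced map $B\mapsto\varphi(B)$ is then a bijection $\cB\to\cB'$. Since $\varphi$ is a bijection on ground sets, it commutes with the set difference, giving $\size(\varphi(A)-\varphi(B))=\size(A-B)$ for any two bases; hence adjacency is preserved in both directions and we obtain a graph isomorphism. For condition~\eqref{item:dual}, I use the map $B\mapsto E-B$ sending each basis of $M$ to the corresponding basis $E-B$ of $M^{*}$. The key computation is that $(E-A)-(E-B)=B-A$, so $\size\bigl((E-A)-(E-B)\bigr)=\size(B-A)=\size(A-B)$, the last equality holding because all bases of a matroid have the same cardinality and therefore $\size(A-B)=\size(B-A)$. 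Thus complementation is an adjacency-preserving bijection and the basis exchange graphs of a matroid and its dual coincide.

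Condition~\eqref{item:loops} is the one I expect to require the most care, and it is the main obstacle, because adding or deleting loops and coloops changes the ground set, so the bijection of bases is no longer given by a map on a fixed ground set. Here I would argue as follows. It suffices to treat a single added loop and a single added coloop, since the general statement follows by iterating. Deleting a loop $\ell$ removes an element that lies in no basis, so the bases are literally unchanged as sets, and the map is the identity on $\cB$; adjacency is trivially preserved. Adding or deleting a coloop $c$ replaces each basis $B$ by $B\cup\{c\}$ (or the reverse), a bijection $\cB\to\cB'$; since the coloop is added to every basis, for any two bases we have $(A\cup\{c\})-(B\cup\{c\})=A-B$, so again $\size$ of the difference is unchanged and adjacency is preserved. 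Composing these elementary bijections yields a graph isomorphism whenever $M'$ differs from $M$ only by loops and coloops.

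In each case the verification reduces to the single observation that the chosen bijection does not alter the symmetric difference size that defines edges, so I would present the three bijections and the three corresponding identities $\size(A-B)=\size(\text{image}(A)-\text{image}(B))$ compactly rather than belabouring the routine set-theoretic checks. The cleanest exposition treats \eqref{item:iso} and \eqref{item:dual} in one or two lines each and devotes the bulk of the argument to carefully setting up the ground-set change in \eqref{item:loops}.
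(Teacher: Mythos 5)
Your proposal is correct and follows essentially the same route as the paper: the same ground-set bijection for (i), the same complementation map $B\mapsto E-B$ with the identity $(E-A)-(E-B)=B-A$ for (ii), and the observation that loops leave the bases untouched for (iii). The only divergence is minor: for coloops you verify directly that $B\mapsto B\cup\{c\}$ preserves difference sizes, whereas the paper simply reduces the coloop case to the duality case (ii); both arguments are equally valid.
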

\begin{proof}
	\eqref{item:iso} If $M'$ and $M$ are isomorphic then there is a bijection $\varphi$ of the ground sets, which maps bases to bases. Obviously, this map induces a bijection between the nodes of $G(M')$ and $G(M)$.
   This is edge-preserving, as for any two bases $A$ and $B$ of $M'$ we have that $\varphi(A)-\varphi(B) =  \varphi(A-B)$ is of the same size as $A-B$.

	\eqref{item:dual} The map $B\mapsto E-B$ on the bases of $M$ is a bijection from the bases of $M$ to the bases of the dual matroid~$M^*$.
   Moreover, for each pair of bases $A$ and $B$ we have that $(E-A) - (E-B) = B - A$ and $\size(B-A) = \size(A-B)$.
   Thus, this map is a graph isomorphism.

	\eqref{item:loops} Adding a loop to a matroid does not change the set of bases,  and hence it preserves the basis exchange graph.
	The isomorphism of the basis exchange graphs for coloops follows  from \eqref{item:dual}.
\end{proof}

Next we describe an important operation which constructs a new matroid from two given matroids. For this purpose let $M_1$ be the matroid with bases $\cB_1$ on the ground set $E_1$ and and $M_2$ be the matroid with bases $\cB_2$ on $E_2$, where the ground sets $E_1$ and $E_2$ are disjoint. 
Let $E\coloneqq E_{1}\cup E_{2}$ and $\cB\coloneqq \SetOf{B_{1}\cup B_{2}}{B_{1}\in \cB_{1},B_{2}\in \cB_{2}}$.
The collection $\cB$ is the set of bases of a matroid $M$ on the ground set $E$ called the \emph{direct sum} $M_{1}\oplus M_{2}$ of $M_{1}$ and $M_{2}$. We say that a matroid is \emph{connected} if it is not the direct sum of two matroids. If a matroid $M=M_1\oplus\dots\oplus M_k$ is the direct sum of connected matroids, then  the ground sets $E_1,\ldots,E_k$ are the \emph{connected components of $M$}.

The matroid polytope $P(M_1\oplus M_2)$ is the \emph{product} $P(M_{1})\times P(M_{2})$ of the polytopes $P(M_{1})$ and $P(M_{2})$; that is for $M=M_1\oplus M_2$, 
\[
P(M)=P(M_{1})\times P(M_{2})\coloneqq\SetOf{(x_{1},x_{2})\in \mathbb R^{\size E_{1}+\size E_{2}}}{ x_{1}\in P(M_1) \text{ and } x_{2}\in P(M_2)}.
\] 
The basis exchange graph $G(M)$ is the \emph{Cartesian product} $G(M_{1})\times G(M_{2})$ of $G(M_{1})$ and $G(M_{2})$; that is, $G(M)$ is a graph on $V(G(M_{1}))\times V(G(M_{2}))$. Its edge set consists of all pairs $((v_1,v_2),(v_1',v_2'))$ of nodes with $v_1=v_1'$ and $(v_2,v_2')\in E(G(M_2))$, or $v_2=v_2'$ and $(v_1,v_1')\in E(G(M_1))$.  
Here $V(\cdot)$ and $E(\cdot)$ denote the node and edge set of a graph, respectively. The graphs $G(M_1)$ and $G(M_2)$ are \textit{factors} of the product $G(M)$.

It is convenient to define the  \emph{rank} of a matroid $M$, denoted $\rank M$, as the size of a basis in $\cB$ (which are all of same size) and its \emph{corank} as the rank of the dual matroid of $M$. We will later see in Example~\ref{ex:mat1} that we cannot read off the rank or the corank of $M$ from  the abstract exchange graph $G(M)$. 
\begin{example}\label{ex:directsum}
    Consider a rank-one matroid $M$ with bases $1$, $5$ and $6$ on the ground set $\{1,5,6\}$ and a rank-two matroid $M'$ with bases $23$ and $24$ and $34$ on $\{2,3,4\}$. Here and later, we omitted curly brackets and commas to denote bases to improve readability.
    The direct sum $M\oplus M'$ is a rank-three matroid on the ground set $\{1,2,3,4,5,6\}$ with the nine bases $123$, $124$, $134$, $236$, $235$, $246$, $245$, $346$ and $345$.
    The dual matroid $(M')^*$ of $M'$ is a rank-one matroid with bases $2$, $3$ and $4$. Another example of a  direct sum is the matroid $M\oplus (M')^*$, which is a rank-two matroid on the ground set $\{1,2,3,4,5,6\}$ with the nine bases $12$, $13$, $14$, $25$, $26$, $35$, $36$, $45$ and $46$.
    
    The two matroids $M\oplus M'$ and $M\oplus (M')^*$ have isomorphic basis exchange graphs; see Lemma~\ref{lem:disconnected} and Example~\ref{ex:mat1} below.
\end{example}

The following extends our list of matroids with isomorphic basis exchange graphs provided in Lemma~\ref{lem:samegraph}. 
\begin{lemma}\label{lem:disconnected}
   Let $M_1$, $M'_1$ and $M_2$, $M'_2$ be two pairs of matroids with isomorphic basis exchange graphs.   
   Then the basis exchange graph of $M_1\oplus M_2$ and $M'_1\oplus M'_2$ are isomorphic as well.
\end{lemma}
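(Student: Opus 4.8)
The plan is to reduce the statement to a purely graph-theoretic fact and invoke the structural result established just above, namely that the basis exchange graph of a direct sum is the Cartesian product of the factors: $G(M_1\oplus M_2)=G(M_1)\times G(M_2)$ and likewise $G(M'_1\oplus M'_2)=G(M'_1)\times G(M'_2)$. With this identification in hand, it suffices to prove that the Cartesian product of graphs respects isomorphism, i.e.\ that isomorphisms of the factors assemble into an isomorphism of the product.

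First I would fix graph isomorphisms $\varphi_1\colon G(M_1)\to G(M'_1)$ and $\varphi_2\colon G(M_2)\to G(M'_2)$, each a bijection on nodes preserving adjacency in both directions. I then define the product map $\Phi\colon V(G(M_1))\times V(G(M_2))\to V(G(M'_1))\times V(G(M'_2))$ by $\Phi(u,v)=(\varphi_1(u),\varphi_2(v))$. Since a product of bijections is a bijection, $\Phi$ is a bijection between the node sets of the two product graphs.

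Next I would check that $\Phi$ and its inverse preserve edges, using the explicit edge set $V(G(M_1))\times E(G(M_2))\cup E(G(M_1))\times V(G(M_2))$ of the Cartesian product. Every edge has one of two forms. An edge of type $E(G(M_1))\times V(G(M_2))$ joins $(u,v)$ and $(u',v)$ with $\{u,u'\}\in E(G(M_1))$; because $\varphi_1$ is edge-preserving, $\{\varphi_1(u),\varphi_1(u')\}\in E(G(M'_1))$, so $\Phi$ sends this to the edge joining $(\varphi_1(u),\varphi_2(v))$ and $(\varphi_1(u'),\varphi_2(v))$, which lies in $E(G(M'_1))\times V(G(M'_2))$. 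The symmetric argument handles edges of type $V(G(M_1))\times E(G(M_2))$ via $\varphi_2$. Applying the same reasoning to the inverse isomorphisms $\varphi_1^{-1},\varphi_2^{-1}$ shows that $\Phi^{-1}$ is edge-preserving too, so $\Phi$ is a graph isomorphism.

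I do not expect a genuine obstacle here; the content is the functoriality of the Cartesian product under isomorphism, and the only care required is to keep the two edge types separate and to note that adjacency is reflected as well as preserved (so that nonadjacent pairs stay nonadjacent), which is automatic since $\varphi_1,\varphi_2$ are isomorphisms rather than mere homomorphisms. The lemma then follows by chaining the identifications $G(M_1\oplus M_2)=G(M_1)\times G(M_2)\cong G(M'_1)\times G(M'_2)=G(M'_1\oplus M'_2)$.
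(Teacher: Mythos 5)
Your proposal is correct and follows the same route as the paper: both rest on the identification $G(M_1\oplus M_2)=G(M_1)\times G(M_2)$ stated just before the lemma, after which the isomorphism of the products follows. The paper compresses this into one line, while you spell out the (routine) verification that factor isomorphisms assemble into an isomorphism of Cartesian products; that is exactly the step the paper leaves implicit.
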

\begin{proof} The basis exchange graph $G(M_1\oplus M_2)$ is the Cartesian product of $G(M_1)$ and $G(M_2)$ and hence this graph is isomorphic to $G(M_1')\times G(M_2')=G(M'_1\oplus M'_2)$.
\end{proof}

Now we are prepared to exploit the structure of a basis exchange graph.
We benefit from results of Maurer, whose main theorem in \cite{Maurer:1973} is a characterisation of basis exchange graphs. A building block in his theorem is the concept of neighbourhood subgraphs.
Given a node $B$ in a graph $G$, its \emph{neighbourhood subgraph} $N(B)$ consists of all nodes that are adjacent to $B$ and all edges of $G$ that connect two of these nodes.  

Neighbourhood subgraphs in a basis exchange graph $G(M)$ are line graphs.
The \emph{line graph} $L(H)$ of a graph $H$ is the graph whose nodes are the edges of $H$ and two nodes share an edge whenever the corresponding edges in $H$ are adjacent. A graph is {\it bipartite} if its vertex set can be partitioned into two sets $X_{1}$ and $X_{2}$ with every edge connecting a node in $X_{1}$ to a node in $X_{2}$; we call the two sets, that label the nodes, $X_{1}$ and $X_{2}$ {\it partite sets}.

\begin{lemma}[{Link condition of basis exchange graphs. \cite[Lemma 1.8]{Maurer:1973}, \cite{ChalopinEtAl:2015}}] \label{lem:linegraph}
	For any node $B$ in a (labelled) basis exchange graph, the neighbourhood subgraph  is the line graph of a (labelled) bipartite graph with partite sets $B$ and $E-B$. 
\end{lemma}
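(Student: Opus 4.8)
The plan is to exhibit the bipartite graph explicitly and then verify that its line graph reproduces the neighbourhood subgraph both on nodes and on edges. Fix a basis $B\in\cB$. First I would describe the neighbours of $B$ in $G(M)$: a basis $A$ is adjacent to $B$ precisely when $\size(A-B)=1$, and since all bases have the same cardinality this forces $A=B-b+a$ for some $b\in B$ and $a\in E-B$ with $B-b+a\in\cB$. Because $a$ is the unique element of $A-B$ and $b$ the unique element of $B-A$, the pair $(b,a)$ is determined by $A$, so the assignment $A\mapsto\{b,a\}$ is a bijection between the neighbours of $B$ and a set of unordered pairs with one endpoint in $B$ and one in $E-B$.

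This suggests defining the bipartite graph $H$ with partite sets $B$ and $E-B$, placing an edge between $b\in B$ and $a\in E-B$ exactly when $B-b+a\in\cB$. By construction the edge set of $H$ is in bijection with the neighbours of $B$, and since the nodes of the line graph $L(H)$ are by definition the edges of $H$, the bijection above identifies the node set of $N(B)$ with the node set of $L(H)$, respecting the labelling by $B$ and $E-B$ in the labelled case.

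It remains to check that this node bijection preserves adjacency. Given two neighbours $A_1=B-b_1+a_1$ and $A_2=B-b_2+a_2$, I would compute $\size(A_1-A_2)$ by a short case analysis: if $b_1=b_2$ and $a_1\neq a_2$ then $A_1-A_2=\{a_1\}$; if $a_1=a_2$ and $b_1\neq b_2$ then $A_1-A_2=\{b_2\}$; and if $b_1\neq b_2$ and $a_1\neq a_2$ then $A_1-A_2=\{a_1,b_2\}$. Hence $A_1$ and $A_2$ are adjacent in $N(B)$, i.e.\ the size of the difference equals $1$, if and only if the corresponding edges $\{b_1,a_1\}$ and $\{b_2,a_2\}$ of $H$ share an endpoint, which is exactly the adjacency relation defining $L(H)$.

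I do not expect a genuine obstacle: beyond the fact that all bases of $M$ are equicardinal, the argument uses no matroid structure, as every step is a set-theoretic identity about the single-element swaps at $B$. The only points requiring care are that the swap representation $A=B-b+a$ is unique, so that $A\mapsto\{b,a\}$ is well defined and injective, and that the three-case computation of $\size(A_1-A_2)$ is carried out cleanly, with the degenerate overlap $b_1=b_2$ and $a_1=a_2$ excluded since it would force $A_1=A_2$. Tracking the labelling by $B$ and $E-B$ throughout then yields the labelled version of the statement.
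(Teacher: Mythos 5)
Your proof is correct and matches the paper's treatment: the paper (citing Maurer) uses exactly the same bipartite graph on $B$ and $E-B$ with the identification $(b,a)\mapsto B-b+a$, merely asserting the adjacency equivalence that you verify by case analysis. Your three-case computation of $\size(A_1-A_2)$ is a clean filling-in of the step the paper dismisses with ``clearly,'' and your observation that only equicardinality of bases is needed is accurate.
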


Clearly, two isomorphic graphs have isomorphic line graphs. Also, the complete graph $K_3$ on three nodes and the connected bipartite graph $K_{1,3}$ with three edges pairwise adjacent have isomorphic line graphs. However, Whitney \cite{Whitney:1932} showed that these are the only nonisomorphic graphs with isomorphic line graphs. 
By \cref{lem:linegraph}, for a node $B$ of a basis exchange graph,  if we disregard the labels of $N(B)$, then Whitney's result yields that there is a unique abstract bipartite graph $G$ with line graph $N(B)$. We may assume that the nodes that form an edge of $G$ are ordered with respect to the partition sets. Then there is correspondence between the nodes of $N(B)$ and edges of $G$ via
\[
   u \enspace \mapsto  (x,y).
\]
We now assign node labels to $G$. For the node $u$ with label $C = B-b+e$, we label the node $x$ by the unique element $B-C = \{b\}$ and the node $y$ by the unique element in $C-B=\{e\}$. This labelling is well-defined as $N(B)$ is the line graph of $G$, and the inverse of the labelling map is given by $(b,e)\mapsto  B-b+e$. We denote the graph $G$ with or without its labelling by $L^{-1}(B)$.

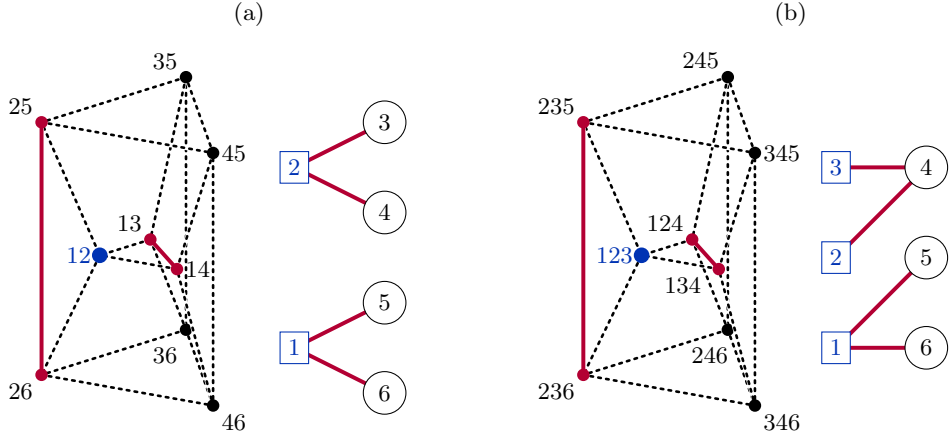
\begin{figure}
	\caption{Isomorphic basis exchange graphs of matroids of different ranks. 
	(a) The basis exchange graph of the matroid $M_1$ of Example~\ref{ex:mat1}, with the bipartite graph that corresponds to the neighbourhood subgraph $N(12)$ which is highlighted in red.
	(b) The basis exchange graph of the matroid $M_2$, with the bipartite graph of the subgraph $N(123)$ which is also highlighted in red.
	\label{fig:example}}
	\footnotesize
\begin{tikzpicture}[scale = 1.2,
                    color = {black}]

  \tikzstyle{linestyle} = [preaction={draw=white, line cap=round, line width=1.5 pt}, dotted, draw=myblack, line cap=round, line join=round,line width=1.0 pt];
  \tikzstyle{linestyle2} = [preaction={draw=white, line cap=round, line width=1.8 pt}, myred2, line cap=round, line join=round,line width=1.5 pt];
  \tikzstyle{inner} = [myblack, line width=1.1 pt,fill=myblack];
  \tikzstyle{node1} = [fill= white, draw=myblue, rectangle, text=myblue];
  \tikzstyle{node2} = [fill= white, draw, circle, minimum size=3mm];

  \coordinate (x0) at (0,0);
  \coordinate (x1) at (1,0.5);
  \coordinate (x2) at (1.3,-0.33);

  \coordinate (v) at ($0.15*(x1)+0.15*(x2)$);  
  \coordinate (n2) at ($(v)+0.45*(1.9,-0.34)$); 
  \coordinate (n1) at ($(v)+0.35*(1.6,0.5)$);   

  \coordinate (n3) at (-0.3,1.5);              
  \coordinate (o1) at (1.3,2);                 
  \coordinate (o3) at (1.6,1.16);              

  \coordinate (n4) at ($(n3)-(0,2.8)$);          
  \coordinate (o2) at ($(o1)-(0,2.8)$);          
  \coordinate (o4) at ($(o3)-(0,2.8)$);          

   \draw[linestyle] (o2) -- (n4) -- (o4) -- cycle;
   \draw[linestyle] (o1) -- (o2);
   \draw[linestyle] (o1) -- (n1) -- (o2);
   \draw[linestyle] (n1) -- (v) -- (n3);
   \draw[linestyle] (n2) -- (v) -- (n4);
   \draw[linestyle2] (n1) -- (n2);
   \draw[linestyle2] (n3) -- (n4);
     \draw[inner] (o2) circle (1.5 pt);
   \draw[linestyle] (o3) -- (n2) -- (o4) -- cycle;
   \draw[linestyle] (o1) -- (n3) -- (o3) -- cycle;

  \draw[inner,myblue] (v) circle (2 pt);
  \draw[inner, myred2] (n1) circle (1.5 pt);
  \draw[inner, myred2] (n2) circle (1.5 pt);
  \draw[inner, myred2] (n3) circle (1.5 pt);
  \draw[inner, myred2] (n4) circle (1.5 pt);
  \draw[inner] (o1) circle (1.5 pt);
  \draw[inner] (o3) circle (1.5 pt);
  \draw[inner] (o4) circle (1.5 pt);

   \node[left,myblue] at (v) {$12$};
   \node[above left] at (n1) {$13$};
   \node[right] at (n2) {$14$};
   \node[above left] at (n3) {$25$};
   \node[below left] at (n4) {$26$};
   \node[above left] at (o1) {$35$};
   \node[below left] at ($(o2)-(0,.1)$) {$36$};
   \node[right] at (o3) {$45$};
   \node[below right] at (o4) {$46$};

   \coordinate (a1) at (2.5, 1);
   \coordinate (a2) at (2.5,-1);
   \coordinate (b1) at (3.5, 1.5);
   \coordinate (b2) at (3.5, .5);
   \coordinate (b3) at (3.5,-.5);
   \coordinate (b4) at (3.5,-1.5);

   \draw[linestyle2] (b1) -- (a1) -- (b2);
   \draw[linestyle2] (b3) -- (a2) -- (b4);

  \node[node1] at (a1) {$2$};
  \node[node1] at (a2) {$1$};
  \node[node2] at (b1) {$3$};
  \node[node2] at (b2) {$4$};
  \node[node2] at (b3) {$5$};
  \node[node2] at (b4) {$6$};

  \coordinate (trans) at (6,0); 
  \coordinate (xx0) at ($(x0)+(trans)$);
  \coordinate (xx1) at ($(x1)+(trans)$);
  \coordinate (xx2) at ($(x2)+(trans)$);
  
    \coordinate (xv) at ($(v)+(trans)$);
    \coordinate (xn1) at ($(n1)+(trans)$);
    \coordinate (xn2) at ($(n2)+(trans)$);

    \coordinate (xn3) at ($(n3)+(trans)$);
    \coordinate (xo1) at ($(o1)+(trans)$);
    \coordinate (xo3) at ($(o3)+(trans)$);
            
    \coordinate (xn4) at ($(n4)+(trans)$);
    \coordinate (xo2) at ($(o2)+(trans)$);    
    \coordinate (xo4) at ($(o4)+(trans)$);

   \draw[linestyle] (xo2) -- (xn4) -- (xo4) -- cycle;
   \draw[linestyle] (xo1) -- (xo2);
   \draw[linestyle] (xo1) -- (xn1) -- (xo2);
   \draw[linestyle] (xn1) -- (xv) -- (xn3);
   \draw[linestyle] (xn2) -- (xv) -- (xn4);
   \draw[linestyle2] (xn1) -- (xn2);
   \draw[linestyle2] (xn3) -- (xn4);
     \draw[inner] (xo2) circle (1.5 pt);
   \draw[linestyle] (xo3) -- (xn2) -- (xo4) -- cycle;
   \draw[linestyle] (xo1) -- (xn3) -- (xo3) -- cycle;

  \draw[inner,myblue] (xv) circle (2 pt);
  \draw[inner, myred2] (xn1) circle (1.5 pt);
  \draw[inner, myred2] (xn2) circle (1.5 pt);
  \draw[inner, myred2] (xn3) circle (1.5 pt);
  \draw[inner, myred2] (xn4) circle (1.5 pt);
  \draw[inner] (xo1) circle (1.5 pt);
  \draw[inner] (xo3) circle (1.5 pt);
  \draw[inner] (xo4) circle (1.5 pt);

   \node[left,myblue] at (xv) {$123$};
   \node[above left] at (xn1) {$124$};
   \node[below left] at ($(xn2)-(.1,0)$) {$134$};
   \node[above left] at (xn3) {$235$};
   \node[below left] at (xn4) {$236$};
   \node[above left] at (xo1) {$245$};
   \node[below left] at ($(xo2)-(-.1,0.1)$) {$246$};
   \node[right] at (xo3) {$345$};
   \node[below right] at (xo4) {$346$};

   \coordinate (xa1) at (8.5, 1);
   \coordinate (xa2) at (8.5, 0);
   \coordinate (xa3) at (8.5, -1);
   \coordinate (xb1) at (9.5, 1);
   \coordinate (xb2) at (9.5, 0);
   \coordinate (xb3) at (9.5,-1);

   \draw[linestyle2] (xb2) -- (xa3) -- (xb3);
   \draw[linestyle2] (xa2) -- (xb1) -- (xa1);

  \node[node1] at (xa1) {$3$};
  \node[node1]  at (xa2) {$2$};
  \node[node1]  at (xa3) {$1$};
  \node[node2]  at (xb1) {$4$};
  \node[node2]  at (xb2) {$5$};
  \node[node2]  at (xb3) {$6$};


  \node at (2,2.7) {(a)};
  \node at (8,2.7) {(b)};

\end{tikzpicture}
\end{figure}

\begin{example} \label{ex:mat1}
	Consider the matroid $M_1=M\oplus (M')^*$ of Example~\ref{ex:directsum}. The neighbourhood subgraph of the basis $12$ in the basis exchange graph $G(M_1)$ contains the four nodes labelled by $13$, $14$, $25$, and $26$, and two nonadjacent edges. See Figure~\ref{fig:example}(a).
	Also consider the matroid $M_2=M\oplus M'$ of Example~\ref{ex:directsum}. Here, the neighbourhood subgraph of $123$ in the basis exchange graph $G(M_2)$ contains the four nodes $1 24$, $134$, $236$, and $235$, and two nonadjacent edges. See Figure~\ref{fig:example}(b).
   Figure~\ref{fig:example} illustrates that $G(M_1)$ is isomorphic to $G(M_2)$.
	Moreover, Figure~\ref{fig:example} shows that the two bipartite graphs $L^{-1}(12)$ and $L^{-1}(123)$, which  correspond to the isomorphic neighbourhood subgraphs $N(12)$ and $N(123)$ in Figure~\ref{fig:example}(a) and Figure~\ref{fig:example}(b), are also isomorphic. Additionally, this example shows that matroids of different ranks might have isomorphic basis exchange graphs; even if the matroids are loop and coloop-free, and not duals of one another.
\end{example}

In the rest of this section, we will prove that Lemma~\ref{lem:samegraph} and Lemma~\ref{lem:disconnected} cover all cases where matroids have isomorphic basis exchange graphs.

The next statement summarises the uniqueness of the rank and corank for connected matroids. In the proof, we use the following standard definition. For a matroid $M$ with  ground set $E$, the \emph{rank} $\rank(S)$ of a set $S\subseteq E$ is given by the maximum $\size (B\cap S)$, where $B$ ranges over all bases of $M$. 
\begin{lemma}\label{lem:connected}
    Let $M$ be a matroid and let $B$ be a basis of $M$.
   The following statements are equivalent:
   \begin{enumerate}
      \item \label{it:Mcon} $M$ is connected, 
      \item \label{it:Ncon} the neighbourhood subgraph $N(B)$ is connected,
      \item \label{it:Bcon} the bipartite graph $L^{-1}(B)$ is connected.
   \end{enumerate}
   Moreover, the connected (abstract) graph $L^{-1}(B)$ determines the rank and corank of $M$ as an unordered pair uniquely.
\end{lemma}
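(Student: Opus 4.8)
The plan is to reduce the whole statement to the connectivity of the fundamental-circuit bipartite graph $L^{-1}(B)$ and then transport the result to $N(B)$ through the line-graph relation of \cref{lem:linegraph}. Recall that the edge of $L^{-1}(B)$ joining $b\in B$ to $e\in E-B$ is present exactly when $B-b+e\in\cB$, i.e.\ exactly when $b$ lies in the fundamental circuit $C(e,B)$, the unique circuit contained in $B+e$; thus the neighbours of $e$ are the elements of $C(e,B)-e$. Throughout I use the standard fact that a subset $S\subseteq E$ is a (possibly empty) union of connected components of $M$ if and only if $\rank(S)+\rank(E-S)=\rank M$, together with the remark preceding the statement that loops and coloops are precisely the single-element components and appear as the isolated nodes of $L^{-1}(B)$.

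First I would establish (i)$\Leftrightarrow$(iii). The direction $\neg$(i)$\Rightarrow\neg$(iii) is the easy one recorded before the statement: in a direct sum the fundamental circuit of any $e\in E_i-B$ stays inside the component $E_i$, so $L^{-1}(B)$ has no edge between distinct components. For $\neg$(iii)$\Rightarrow\neg$(i), let $P$ be the vertex set of one connected component of a disconnected $L^{-1}(B)$, so $P$ is proper and nonempty, and put $X=B\cap P$. For each $e\in P-B$ the neighbours $C(e,B)-e$ lie in $P\cap B=X$, so $C(e,B)-e\subseteq X$ and $e$ is spanned by $X$; as $X$ is independent this gives $\rank(P)=\size X$, and symmetrically $\rank(E-P)=\size(B-P)$. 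Summing, $\rank(P)+\rank(E-P)=\size B=\rank M$, so $P$ is a union of components and $M=M|_P\oplus M|_{E-P}$ is disconnected. This argument handles isolated vertices uniformly: a single-vertex $P$ forces either a coloop ($X=P$, $\rank P=1$) or a loop ($X=\emptyset$, $C(e,B)=\{e\}$, $\rank P=0$).

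Next I would pass to (ii) via \cref{lem:linegraph}, which gives $N(B)=L(L^{-1}(B))$. Since a line graph forgets the isolated vertices of its source, and these are exactly the loops and coloops of $M$, the subtle point — the one I expect to require the most care — is that $N(B)$ cannot detect loops or coloops. Consequently (ii) lines up with (i) and (iii) precisely when $L^{-1}(B)$ has no isolated vertices, i.e.\ when $M$ is loopless and coloopless; under this hypothesis every vertex of $L^{-1}(B)$ meets an edge, so $L(L^{-1}(B))$ is connected if and only if $L^{-1}(B)$ is, which together with the first paragraph closes the chain. The trivial cases $\rank M=0$ or $\operatorname{corank}M=0$, where $L^{-1}(B)$ is edgeless, are checked directly.

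Finally, for the \emph{Moreover} statement I would argue componentwise. By the equivalence just proved, the connected components $H_1,\dots,H_k$ of $L^{-1}(B)$ correspond to the connected components $M_1,\dots,M_k$ of $M$ (disjoint union $\leftrightarrow$ direct sum). A connected bipartite graph has a bipartition that is unique up to interchanging the two sides, and for $H_i$ this bipartition is $\{\,B\cap E_i,\ E_i-B\,\}$, whose part sizes are $\rank M_i$ and $\operatorname{corank}M_i$. Hence the abstract graph recovers, for each component, only the unordered pair $\{\rank M_i,\operatorname{corank}M_i\}$ (isolated vertices contributing the ambiguous pair $\{0,1\}$ of a loop versus a coloop), and choosing one entry per component yields $\rank M$. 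Interchanging the two sides of a component is exactly replacing $M_i$ by its dual, which by the duality part of \cref{lem:samegraph} together with \cref{lem:disconnected} leaves the basis exchange graph, and hence $L^{-1}(B)$, unchanged. Therefore the abstract graph pins down $\rank M$ and $\operatorname{corank}M$ precisely up to independently dualising connected components, as asserted. The main obstacle is the separation argument of the second paragraph — turning a graph disconnection into a matroid direct sum — while the isolated-vertex bookkeeping for $N(B)$ is the chief point of care.
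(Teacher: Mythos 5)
Your proof is correct and follows essentially the same route as the paper's: the equivalence of (i) and (iii) by the separator argument (fundamental circuits stay inside direct summands in one direction; in the other, a component $S$ of a disconnected $L^{-1}(B)$ satisfies $\rank(S)=\size(S\cap B)$ and $\rank(E-S)=\size(B-S)$, so $\rank(S)+\rank(E-S)=\rank(M)$ and $S$ is a separator), the equivalence of (ii) and (iii) through the line-graph correspondence of Lemma~\ref{lem:linegraph}, and the \emph{Moreover} part from the uniqueness, up to interchange, of the bipartition of a connected bipartite graph. The one genuine difference is the point you flag as subtle, and there you are in fact more careful than the paper. The paper settles (ii)$\Leftrightarrow$(iii) with the single sentence that edge paths in $L^{-1}(B)$ map to node paths in $N(B)$; this ignores that a line graph forgets isolated vertices of its source, and these are exactly the loops and coloops. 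Your caution is not pedantry: for the matroid on $\{1,2,3\}$ with bases $\{1\}$ and $\{2\}$ (so $3$ is a loop), $N(\{1\})$ is a single node and hence connected, while $M$ and $L^{-1}(\{1\})$ are disconnected, so (ii) does not imply (i) or (iii) as literally stated. Thus the lemma implicitly requires $M$ to be loopless and coloopless (equivalently, that $L^{-1}(B)$ has no isolated nodes), which is precisely the hypothesis under which you close the chain of equivalences. Your componentwise treatment of the \emph{Moreover} statement (each component contributing the unordered pair $\{\rank M_i,\operatorname{corank} M_i\}$, with side-swaps realised by dualising components via Lemma~\ref{lem:samegraph} and Lemma~\ref{lem:disconnected}) also spells out what the paper states only for a connected $L^{-1}(B)$, and it matches the ``up to duality of connected components'' phrasing of the statement.
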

\begin{proof}
   Clearly, an edge path in the bipartite graph of $N(B)$ is mapped to a node path of $N(B)$.
   This already shows the equivalence of \eqref{it:Ncon} and \eqref{it:Bcon}.
   
   The matroid $M$ is disconnected if and only if $M = M_1\oplus M_2$, for some matroids $M_1$ and $M_2$.
   Let $E_1$ and $E_2$ be the ground sets of these matroids.
   For any $e\in E_i\cap B$, we have that $B-e+f$ is a basis only if $f\in E_i$ where $i\in\{1,2\}$. Hence there is no edge from a node in $E_1$ to one in $E_2$ in the bipartite graph. This proves that \eqref{it:Bcon} implies \eqref{it:Mcon}.
   
   On the other hand, suppose the bipartite graph $L^{-1}(B)$ is disconnected, and $S\subset E$ is the set of nodes of a connected component. Then, $S$ is a proper subset of $E$ and $\rank(S) = \rank(S\cap B)$; otherwise there would be an edge between a node in $S-B$ and a node in $B-S$. The same argument holds for $E-S$.
   Thus, the set $S$ satisfies $\rank(S) + \rank(E-S) = \size B = \rank(E)$. This implies that every basis of $M$ contains $\rank(S)$ elements of $S$, and thus $S$ is a connected component of $M$. Consequently, the matroid $M$ is disconnected.
   These arguments show that  \eqref{it:Bcon} follows from \eqref{it:Mcon}.

  Any choice of a labelling of a connected abstract bipartite graph $L^{-1}(B)$ determines the number of nodes in the two partite sets up to their interchange. These sizes are exactly the rank and corank of the matroid $M$ in some order.
\end{proof}
 The proof of \cref{lem:connected} gives the following corollary. 
\begin{corollary}\label{cor:connected} Let $M$ be a matroid and $B$ be a basis of $M$. A connected component of $M$ is the set of labels of all nodes in a connected component of the bipartite graph $L^{-1}(B)$. 
\end{corollary}

Suppose we have an abstract basis exchange graph with a marked node and the corresponding neighbourhood subgraph of the node. As we discussed above, any node labelling and partition of the nodes of the corresponding abstract bipartite graph into two partite sets lead to a labelling of the neighbourhood subgraph and the marked node. This labelling is unique whenever we pick an order of the two partite sets.
Next we describe how we can extend this labelling to a proper labelling of all nodes in the basis exchange graph.
The key ingredient is again the characterisation of Maurer.

A \emph{common neighbour subgraph} of a connected graph $G$ is a subgraph of $G$ that contains two nodes at distance two, all their common neighbours as additional nodes, and all the edges between the aforementioned nodes. The following is Maurer's characterisation.

\begin{theorem}[{\cite[Theorem 2.1]{Maurer:1973}}]\label{thm:Maurer}
  A connected graph $G$ is a basis exchange graph of a matroid if and only if
	\begin{enumerate}
		\item\label{it:Maurer1} it meets the interval condition, i.e., each common neighbour subgraph is a square, a square-based pyramid, or an octahedron;
		\item\label{it:Maurer2} it meets the positioning condition, i.e., for every node $B$ and every induced subgraph which is a square with nodes $A$, $C$, $D$, $E$ in cyclic order,  it holds that 
			\[\dist(B,A)+\dist(B,D)=\dist(B,C)+\dist(B,E);\; \text{and}\]
		\item\label{it:Maurer3} the neighbourhood subgraph $N(B)$ of some node $B$ is the line graph of a  bipartite graph.
	\end{enumerate}
\end{theorem}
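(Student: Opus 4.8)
The plan is to prove both implications, with almost all of the difficulty concentrated in the reconstruction (``if'') direction. For the forward direction, assume $G = G(M)$. Condition~\eqref{it:Maurer3} is precisely Lemma~\ref{lem:linegraph}, so nothing new is needed there. For the other two I would first record the standard fact, provable by induction from the symmetric exchange property, that the graph distance in $G(M)$ satisfies $\dist(A,B) = \size(A - B) = \tfrac12\size(A\triangle B) = \tfrac12\norm{e_A - e_B}_1$, so that $G(M)$ carries (half of) a Hamming metric. Two nodes $A$ and $D$ at distance two then differ by two exchanges, $D = A - a_1 - a_2 + b_1 + b_2$, and every common neighbour has the form $A - a_i + b_j$. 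A short case analysis using the exchange axiom shows that, together with $A$ and $D$, the sets among these four that are bases induce a $4$-cycle (square), a square pyramid, or an octahedron according as two, three, or all four of them are bases; this is the interval condition~\eqref{it:Maurer1}. For the positioning condition~\eqref{it:Maurer2} one checks that in a square $A, C, D, E$ the $0/1$ vectors satisfy $\{e_A(i), e_D(i)\} = \{e_C(i), e_E(i)\}$ as multisets in every coordinate $i$; comparing $\ell_1$ distances coordinatewise then yields $\dist(B,A) + \dist(B,D) = \dist(B,C) + \dist(B,E)$ for every node $B$.

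For the reconstruction direction I must manufacture a matroid out of the abstract graph. Fix a node $B_0$ for which~\eqref{it:Maurer3} holds and let $L^{-1}(B_0)$ be the bipartite graph whose line graph is $N(B_0)$. Declaring its two colour classes to be a set $B_0$ and the complementary part $E - B_0$ of a hypothetical ground set $E$, and identifying each edge $(b,e)$ of $L^{-1}(B_0)$ with the neighbour $B_0 - b + e$ exactly as after Lemma~\ref{lem:linegraph}, fixes a labelling on the closed neighbourhood of $B_0$. I then extend the labelling outwards in order of distance from $B_0$, interpreting each edge of $G$ as a single exchange $b \leftrightarrow e$.

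The crux, and the step I expect to be the main obstacle, is to show that this propagation is well defined, i.e.\ that the label reached at a node is independent of the path taken. I would establish this by showing two things. First, around any square the two two-step paths between opposite corners induce the same label, which is exactly what the positioning condition~\eqref{it:Maurer2} guarantees, since it forces opposite corners to differ by a pair of disjoint exchanges whose order is immaterial. Second, the squares generate the cycle space of $G$, so that consistency around squares propagates to consistency around all cycles. This second point is where the interval condition~\eqref{it:Maurer1} is essential: because every common-neighbour subgraph is a square, pyramid, or octahedron, every cycle can be filled in across these local pieces, making $G$ simply connected with respect to squares. The pyramid and octahedron cases require separate attention, since there a node is pinned down by several of its neighbours at once and one must verify that all of these determinations agree; this is the technical heart of Maurer's argument and must be carried out by a careful induction on distance.

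Once a global labelling $\ell\colon V(G) \to \binom{E}{r}$ has been produced, I set $\cB \coloneqq \ell(V(G))$ and verify that $\cB$ is the set of bases of a matroid. Edges of $G$ force $\size(\ell(A) - \ell(B)) = 1$, and reinterpreting the interval condition~\eqref{it:Maurer1} through the completed labelling yields the symmetric exchange property for every pair of bases at distance two; an induction on distance, again using that squares generate the cycle space, upgrades this to the exchange property for arbitrary pairs. This identifies $G$ with $G(M)$ for the matroid $M$ on $E$ with bases $\cB$, completing the proof.
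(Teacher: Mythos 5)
The first thing to note is that the paper does not prove this statement at all: it is Maurer's characterisation, quoted with the citation \cite[Theorem 2.1]{Maurer:1973} and used as a black box (only its consequences, such as Corollary~\ref{cor:extlabelling} and the correctness of Algorithm~\ref{algo:label}, are argued in the text). So there is no internal proof to compare yours against; what you have attempted is a reproof of Maurer's theorem itself. Your forward direction is essentially complete and correct: condition~\eqref{it:Maurer3} is Lemma~\ref{lem:linegraph}; the distance formula $\dist(A,B)=\size(A-B)$ follows by induction from the exchange axiom; all common neighbours of $A$ and $D=A-a_1-a_2+b_1+b_2$ are among the four sets $A-a_i+b_j$, and the exchange axiom rules out the degenerate configurations, giving the square/pyramid/octahedron trichotomy; and the parallelogram identity $e_A+e_D=e_C+e_E$ for induced squares yields the positioning condition coordinatewise.

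The reconstruction direction, however, contains a genuine gap beyond the one you flag yourself. Your well-definedness argument rests on the claim that \emph{the squares generate the cycle space of $G$}. That claim is false for every basis exchange graph containing a triangle, e.g.\ $G(U_{1,3})$, or any graph whose common neighbour subgraphs include pyramids or octahedra, which are full of triangles: over $\mathbb{F}_2$ a sum of squares always has an even number of edges, so no triangle lies in the span of the squares. The correct statement (which Maurer establishes) is that triangles and squares together generate the cycle space, and consistency must then be checked around both. More fundamentally, labels are not transported deterministically along edges of $G$ --- an edge only asserts that the two labels differ by a single exchange --- so ``consistency around cycles'' is not by itself a well-posed parallel-transport argument. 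What actually pins down the label of a new node is a square through a node two distance-levels back (exactly line 21 of Algorithm~\ref{algo:label}), and the crux is independence of the choice of that square and of the node $\hat v$; this, together with verifying the exchange axiom for pairs of labels at arbitrary distance (your ``induction on distance'' is asserted, not given), is the real technical content of Maurer's proof. As it stands, your text is a correct and well-organised outline of that proof, with the forward implication fully argued, but the hard implication remains an outline rather than a proof.
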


The following is a direct consequence.
\begin{corollary}\label{cor:extlabelling}
   Any valid labelling of a node and its neighbourhood subgraph in a basis exchange graph has a unique extension to the entire graph.
\end{corollary}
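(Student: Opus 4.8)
The plan is to establish existence and uniqueness separately, with essentially all the content in uniqueness. Throughout I would use that a basis exchange graph is connected (any two bases are joined by a sequence of single-element exchanges), so that a breadth-first search from the marked node $B$ eventually reaches every node. Existence I would dispose of by a relabelling argument rather than a construction: since the graph is a basis exchange graph, it is realised by some matroid $M$, furnishing one full valid labelling. By the remark preceding the corollary, both this labelling restricted to $B$ and $N(B)$ and the prescribed labelling arise from labelling and ordering the two colour classes of the same abstract bipartite graph $L^{-1}(B)$ (Lemma~\ref{lem:linegraph}). The two choices therefore differ by a bijection $\sigma$ of the ground set, namely a permutation inside each colour class possibly composed with the duality swap of the classes. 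Applying $\sigma$ to $M$ yields a realisation whose labels on $B$ and $N(B)$ are exactly the prescribed ones, and this is the desired extension.

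For uniqueness, let $\ell$ be any valid labelling of the whole graph restricting to the prescribed labels on $B$ and $N(B)$, and I would prove by induction on $k=\dist(B,X)$ that $\ell(X)$ is forced. The cases $k\le 1$ are the hypothesis. For $k\ge 2$ fix $X$ with $\dist(B,X)=k$ and walk back along a shortest path $Z-Y-X$, so that $\dist(B,Z)=k-2$ and $X,Z$ lie at distance two. The key bookkeeping step is the distance claim that every common neighbour $W$ of $X$ and $Z$ satisfies $\dist(B,W)=k-1$: adjacency to $Z$ gives $\dist(B,W)\le k-1$, while adjacency to $X$ gives $\dist(B,W)\ge k-1$. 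Consequently, in the common neighbour subgraph $S$ of $X$ and $Z$ — which by the interval condition~(\ref{it:Maurer1}) of Theorem~\ref{thm:Maurer} is a square, a pyramid, or an octahedron — every node other than $X$ lies at distance at most $k-1$ and is already determined by the inductive hypothesis. Since $Z$ is the unique node of $S$ at distance two from $X$, the explicit rule stated just before the corollary expresses $\ell(X)$ solely through $\ell(Z)$ and the labels of the common neighbours, namely as the union of (i) the intersection of those neighbour-labels and (ii) the elements occurring in those labels but not in $\ell(Z)$. Hence $\ell(X)$ is forced, completing the induction; connectivity then guarantees that every node is reached, so $\ell$ is unique.

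The hard part will be the local verification that this rule genuinely recovers $\ell(X)$ in each of Maurer's three shapes. Writing labels as bases and each edge as a single exchange, I would check directly that for a square $Z-C-X-E-Z$, where $C=Z-z_1+c$ and $E=Z-z_2+e$, one has $\ell(X)=(\ell(C)\cap\ell(E))\cup\{c,e\}$ and that this coincides with the rule's output; the pyramid and octahedron, which encode local exchanges of larger rank or corank and in which $X$ is adjacent to more common neighbours, require the analogous but more bookkeeping-heavy confirmation that all of these neighbours point to one and the same recovered label. I do not expect to need the positioning condition~(\ref{it:Maurer2}) of Theorem~\ref{thm:Maurer} for uniqueness, since I only assert that any valid $\ell$ \emph{must} satisfy the rule rather than building $\ell$ from scratch; it would, however, be the ingredient ensuring global consistency of the rule if one preferred a direct constructive proof of existence in place of the relabelling argument above.
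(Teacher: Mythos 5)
Your proposal is correct, and its uniqueness half is precisely the paper's proof written out in full: the paper's entire justification consists of stating your local rule (the missing node's label is the union of those elements of the common neighbours' labels absent from the unique distance-two node, together with the intersection of the neighbours' labels) and then declaring that one labels \enquote{along the distance levels}, which is exactly your induction on $\dist(B,X)$; your bookkeeping step that every common neighbour of $X$ and $Z$ sits at level $k-1$ is the detail the paper suppresses. The genuine difference is existence. The paper implicitly treats the level-by-level construction itself as the extension, leaving its global validity unverified (this is where the positioning condition enters, and in the paper it is checked only later, in the final loop of Algorithm~\ref{algo:label}), whereas you obtain existence non-constructively from a matroid realisation of the graph, transported by the ground-set bijection relating the two labellings of $B\cup N(B)$. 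That relabelling argument is sound and arguably makes your proof more complete than the paper's sketch, with one caveat: the \enquote{duality swap} must be allowed independently on each connected component of $L^{-1}(B)$ (cf.\ Theorem~\ref{thm:recmatroids}), not just as a single global swap of the two colour classes. Your one admitted omission---verifying the rule for the pyramid and octahedron---is routine rather than hard: validity forces $\ell(Z)=I+a+b$, $\ell(X)=I+c+d$, and every common neighbour to have a label of the form $I+\alpha+\gamma$ with $\alpha\in\{a,b\}$, $\gamma\in\{c,d\}$; hence the intersection of all the neighbours' labels is $I$ and their union minus $\ell(Z)$ is exactly $\{c,d\}$ as soon as two nonadjacent common neighbours are present, which the interval condition guarantees in all three shapes, so your square computation generalises verbatim.
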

\begin{proof} Let $G$ be the basis exchange graph of a matroid $M$ and let $v$ be a node of $G$. Let us further assume that $G$ is partially labelled. More precisely, we assume that $v$ and all nodes of distance at most $k\geq 1$ from $v$ are labelled by the corresponding bases of $M$.
If the node $u$ is at distance $k+1$ from $v$ then there exists a node $\hat v$ at distance $k-1$ from $v$ and at distance two from $u$. The node $\hat v$ is labelled, say, by $\ell(\hat v)$.
The common neighbour subgraph of $u$ and $\hat v$ contains a square by Theorem~\ref{thm:Maurer}(\ref{it:Maurer1}).
Moreover, we may choose the square such that its nodes are $u$, $\hat v$, and two other nodes $w_1$ and $w_2$. By construction, the nodes $w_1$, $w_2$ are at distance $k$ from $v$, and hence are labelled, say by $\ell(w_1)$ and $\ell(w_2)$, respectively. The basis exchange property of $M$ implies that the unique basis that corresponds to $u$ is $(\ell(w_1)\cap\ell(w_2)) \cup (\ell(w_1)\cup\ell(w_2)-\ell(\hat v))$.
Repetition of this procedure allows us to label all nodes at distance $k+1$ from $v$. Induction on $k$ proves the claim.
\end{proof}

\begin{figure}
	\caption{A partial labelling which extents uniquely to the entire graph
	\label{fig:exampleLabels}}
	\footnotesize
\begin{tikzpicture}[scale = 1.2,
                    color = {black}]

  \tikzstyle{linestyle} = [preaction={draw=white, line cap=round, line width=1.5 pt}, dotted, draw=black, line cap=round, line join=round,line width=1.0 pt];
  \tikzstyle{linestyle2} = [preaction={draw=white, line cap=round, line width=1.8 pt}, myred2, line cap=round, line join=round,line width=1.5 pt];
  \tikzstyle{inner} = [black, line width=1.1 pt,fill=black];
  \tikzstyle{inner2} = [black, line width=0.2 pt,fill=white];
  \tikzstyle{node1} = [fill= white, draw, rectangle];
  \tikzstyle{node2} = [fill= white, draw=none, inner sep = 1pt];

  \coordinate (x0) at (0,0);
  \coordinate (x1) at (1,0.5);
  \coordinate (x2) at (1.3,-0.33);

  \coordinate (v) at ($0.15*(x1)+0.15*(x2)$);  
  \coordinate (n2) at ($(v)+0.45*(1.9,-0.34)$); 
  \coordinate (n1) at ($(v)+0.35*(1.6,0.5)$);   
  
  \coordinate (n3) at (-0.3,1.5);              
  \coordinate (o1) at (1.3,2);                 
  \coordinate (o3) at (1.6,1.16);              

  \coordinate (n4) at ($(n3)-(0,2.8)$);          
  \coordinate (o2) at ($(o1)-(0,2.8)$);          
  \coordinate (o4) at ($(o3)-(0,2.8)$);          


   \draw[linestyle] (o2) -- (n4) -- (o4) -- cycle;
   \draw[linestyle] (o1) -- (o2);
    \draw[linestyle2] (o1) -- (n1);
   \draw[linestyle] (n1) -- (o2);
   \draw[linestyle2] (n1) -- (v) -- (n3);
   \draw[linestyle] (n2) -- (v) -- (n4);
   \draw[linestyle] (n1) -- (n2);
   \draw[linestyle] (n3) -- (n4);
     \draw[inner] (o2) circle (1.5 pt);
   \draw[linestyle] (o3) -- (n2) -- (o4) -- cycle;
   \draw[linestyle] (n3) -- (o3) -- (o1);
   \draw[linestyle2] (o1) -- (n3);

  \draw[inner, myred2] (v) circle (2 pt);
  \draw[inner, myred2] (n1) circle (2 pt);
  \draw[inner] (n2) circle (1.5 pt);
  \draw[inner, myred2] (n3) circle (2 pt);
  \draw[inner] (n4) circle (1.5 pt);
  \draw[inner, myred2] (o1) circle (2 pt);
  \draw[inner] (o3) circle (1.5 pt);
  \draw[inner] (o4) circle (1.5 pt);

   \node[left] at (v) {$123$};
   \node[above left] at (n1) {$124$};
   \node[below left] at ($(n2)-(0.1,0)$) {$134$};
   \node[above left] at (n3) {$235$};
   \node[below left] at (n4) {$236$};
   
   \node[above left] at (o1) {$\ell(u)$};

   \coordinate (l0) at (5.5, -1);
   \coordinate (l1a) at (4,0);
   \coordinate (l1b) at (7, 0);
   \coordinate (l2) at (5.5, 1);

   \draw[linestyle2] (l0) -- (l1a) -- (l2) -- (l1b) -- cycle;

  \draw[inner, myred2] (l0) circle (2 pt);
  \draw[inner, myred2] (l1a) circle (2 pt);
  \draw[inner, myred2] (l1b) circle (2 pt);
  \draw[inner, myred2] (l2) circle (2 pt);

  \node[below] at ($(l0)-(0,0.1)$) {$\ell(\hat v) = 123$};
  \node[below left] at ($(l1a)-(0.1,0)$) {$\ell(w_1) = 124$};
  \node[below right] at ($(l1b)+(0.1,0)$) {$\ell(w_2) = 235$};
  \node[above] at (l2) { $\ell(u)  = (124 \cap 235) \cup ((124\cap235)-123) = 245$};

\end{tikzpicture}
\end{figure}
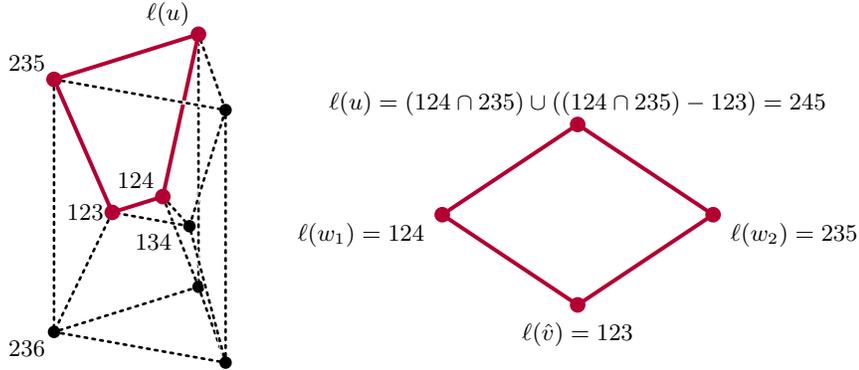

\begin{example} Consider the graph from Example~\ref{ex:mat1} with the partial labelling of a node and its neighbourhood subgraph as depicted on the left of Figure~\ref{fig:exampleLabels}. The highlighted square induces the unique label $\ell(u)$ of the node $u$, which is at distance two from the node labelled $123$. Here the node $123$ plays the role of both nodes $v$ and $\hat v$ in the proof of Corollary~\ref{cor:extlabelling}. How one obtains the label of $u$ is illustrated on the right hand side of Figure~\ref{fig:exampleLabels}. 
\end{example}

We sum up our results in the following theorem.
\begin{theorem}\label{thm:recmatroids}
   An abstract basis exchange graph determines its matroid $M$ up to
   \begin{enumerate}
      \item the isomorphic type of $M$,
      \item duality of the connected components of $M$, and
      \item possible loops and coloops.
   \end{enumerate}
\end{theorem}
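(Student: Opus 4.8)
The two directions of the statement are best handled separately. The ``if'' direction---that the three listed operations yield isomorphic basis exchange graphs---is already contained in Lemma~\ref{lem:samegraph} and Lemma~\ref{lem:disconnected}: isomorphism and loops/coloops are Lemma~\ref{lem:samegraph}\eqref{item:iso} and \eqref{item:loops}, while dualizing a single connected component $M_i$ of $M=M_1\oplus\dots\oplus M_k$ leaves the graph unchanged by applying Lemma~\ref{lem:disconnected} to the pair $M_i,M_i^*$ (whose graphs agree by Lemma~\ref{lem:samegraph}\eqref{item:dual}) together with the identity on the remaining factors. So the substance is the converse: if $G(M)\cong G(M')$, then $M'$ arises from $M$ by these operations. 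My plan is to show that an abstract basis exchange graph admits, up to these operations, essentially one valid labelling, by analysing all the freedom available at a single node.

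First I would fix a node $v$ of the abstract graph $G$; in any labelling $v$ receives some basis $B$. By Lemma~\ref{lem:linegraph} the neighbourhood subgraph $N(v)$ is the line graph of a bipartite graph, and I would recover that bipartite graph $L^{-1}(v)$ from $N(v)$ via Whitney's theorem on line graphs (the exceptional $K_3/K_{1,3}$ ambiguity cannot occur, since $L^{-1}(v)$ is bipartite). Crucially, the inverse line graph only sees the non-isolated part of $L^{-1}(v)$, so isolated vertices---precisely the coloops (in the $B$-class) and loops (in the $E-B$ class), which contribute no edges $(b,e)\mapsto B-b+e$ to $N(v)$---are invisible in $G$. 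This is the origin of the ``loops and coloops'' ambiguity, and it lets me assume from now on that $M$ and $M'$ are free of loops and coloops.

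Next I would read off the connected components. By Lemma~\ref{lem:connected}, applied through the equivalence of \eqref{it:Mcon} and \eqref{it:Bcon}, the connected components of $L^{-1}(v)$ correspond bijectively to the connected components $E_1,\dots,E_k$ of $M$, each carrying two colour classes whose sizes are the rank and corank of that component, determined only up to interchange. A valid labelling of $v$ and $N(v)$ then amounts to naming the ground-set elements (a matroid isomorphism) together with an independent choice, in each component $C_j$, of which colour class plays the role of $B\cap E_j$ and which plays $E_j-(B\cap E_j)$. Swapping the two colour classes in $C_j$ sends $B\cap E_j$ to its complement in $E_j$, which is exactly the effect of replacing the component $M_j$ by its dual $M_j^*$. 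By Corollary~\ref{cor:extlabelling} each such choice extends uniquely to a labelling of all of $G$, hence determines the matroid completely. Thus any two matroids realizing $G$ differ only by a relabelling of the ground set, by independent dualization of connected components, and by the addition or deletion of loops and coloops, which is the assertion.

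The main obstacle I anticipate lies in the bookkeeping of this last step: verifying rigorously that the colour-class swap within a single component $C_j$ corresponds to dualizing exactly that connected matroid $M_j$ and nothing else, and that these orientation choices really are mutually independent across components. This rests on combining the explicit embedding $(b,e)\mapsto B-b+e$ of $L^{-1}(v)$ into $N(v)$ with the component decomposition established in the proof of Lemma~\ref{lem:connected}, and on checking that each of the resulting choices yields a labelling that Corollary~\ref{cor:extlabelling} can actually extend to a genuine matroid---which is guaranteed, since $M$ and all of its component-wise duals realize $G$ by the already-established ``if'' direction. A secondary technical point is the careful treatment of the small and degenerate cases in the Whitney reconstruction of $L^{-1}(v)$.
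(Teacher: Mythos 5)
Your proposal is correct and follows essentially the same route as the paper: the paper presents Theorem~\ref{thm:recmatroids} as a summary of Lemma~\ref{lem:samegraph}, Lemma~\ref{lem:disconnected}, Lemma~\ref{lem:connected} and Corollary~\ref{cor:extlabelling}, combined with the observation that loops/coloops, duality, isomorphism and direct sums appear in the bipartite graph $L^{-1}(B)$ as isolated nodes, colour-class swaps, graph isomorphisms and disjoint unions---exactly the ingredients you assemble. The extra details you supply (Whitney's theorem to recover $L^{-1}(v)$ from $N(v)$, and the remark that existence of the extended labellings is guaranteed by the already-proved ``if'' direction) only make explicit what the paper leaves implicit.
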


\section{Reconstructing matroid polytopes}\label{sec:RecMatroidPolytopes}
\noindent In this section, we prove that matroid polytopes are class reconstructible from their graphs. Furthermore, we show that they are neither class reconstructible from their dual graphs nor reconstructible from their graphs. We begin by presenting Algorithm~\ref{algo:label}, which computes a (valid) labelling from a basis exchange graph if it exists.

\begin{algorithm}[htb]
  \dontprintsemicolon
	\Input{A connected graph $G=(V,E)$ with at least two nodes}
	\Output{A valid labelling $\ell$ or \texttt{False}}
  \smallskip

  Pick $v\in V$\;
  $H$ $\leftarrow \SetOf{w\in V}{(v,w)\in E}$\;
	\If{ $(H,E\cap (H\times H))$ is not a line graph of a bipartite graph}{
		\Return \texttt{False}\;
		}
  
  $(B_1 \cup B_2, E_B)$ $\leftarrow$ The ``original'' graph of the line graph $(H,E\cap (H\times H))$\;
  $\varphi:H\to E_B$ $\leftarrow$  The bijection of the above\;
  Set $\ell(v) = B_1$\;
  \For{$w\in H$}{
	  $(e,f)$ $\leftarrow$ $\varphi(w)$\;
	  Set $\ell(w) = B_1-e+f$ \;
  }
	\For{$j=2,\ldots,\size{B_1}$}{
	  $L$ $\leftarrow$ $\SetOf{u\in V}{\dist(u,v)=j}$\;
	  \While{ $L$ is not empty}{
		  Pick $u\in L$ and remove $u$ from $L$\;
		  Pick $\hat v\in\SetOf{x\in V}{\dist(x,v)=j-2 \text{ and } \dist(x,u)=2}$\;
		  $T$ $\leftarrow$ $\SetOf{w\in V}{\dist(w,u)=\dist(w,\hat v)=1}$\;
		  $W$ $\leftarrow$ $\SetOf{(x_1,x_2)\in T\times T}{x_1\neq x_2 \text{ and } (x_1,x_2)\not\in E}$\;
		  \If{$W$ is empty}{\Return \texttt{False}\;}
		  Pick $(w_1,w_2)\in W$\;
	  	  Set $\ell(u)=(\ell(w_1)\cap\ell(w_2)) \cup (\ell(w_1)\cup\ell(w_2)-\ell(\hat v))$\;
	  }
  }
	\For{$u,w\in V$}{
		\If{$\size (\ell(u)-\ell(v)) \neq \dist(u,v)$ \Or  $\dist(u,w)=2$ and the common neighbour subgraph is neither a square, square-based pyramid or octahedron}{\Return \texttt{False}\;}
	}
	\Return $\ell$\;

	\caption{Labelling a basis exchange graph.}
	\label{algo:label}

\end{algorithm}

\begin{proposition}
	Algorithm~\ref{algo:label} decides whether a graph with $n$ nodes is a basis exchange graph of a matroid, and if it is then the algorithm returns a valid labelling of the graph in $O(n^3)$ steps.
\end{proposition}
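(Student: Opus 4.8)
The plan is to prove two things about Algorithm~\ref{algo:label}: \emph{correctness} (it outputs a valid labelling exactly when the input is a basis exchange graph, and \texttt{False} otherwise) and the \emph{running time} bound $O(n^3)$. Correctness rests almost entirely on the results already assembled in Section~\ref{sec:BasisExchangeGraphs}. First I would verify that the algorithm faithfully implements the labelling scheme justified by Corollary~\ref{cor:extlabelling}: the initial block (lines picking $v$, forming $H$, testing whether the induced neighbourhood is a line graph of a bipartite graph, and reading off $L^{-1}(v)$) produces a valid labelling of $v$ and $N(v)$ precisely by Lemma~\ref{lem:linegraph}. If the neighbourhood is not such a line graph, condition~(\ref{it:Maurer3}) of Maurer's theorem (Theorem~\ref{thm:Maurer}) fails, so the graph is not a basis exchange graph and returning \texttt{False} is correct. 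The nested loop over distance levels $j$ then extends the labelling outward; I would argue that, for each newly labelled node $u$ at distance $j$, choosing any $\hat v$ at distance $j-2$ with $\dist(\hat v,u)=2$ and any non-edge pair $(w_1,w_2)$ among the common neighbours $T$ realises exactly the ``single-unlabelled-node'' labelling rule described after Theorem~\ref{thm:Maurer}, namely $\ell(u)=(\ell(w_1)\cap\ell(w_2))\cup(\ell(w_1)+\ell(w_2)-\ell(\hat v))$. The content to check here is that the value is \emph{independent} of the choices of $\hat v$ and of $(w_1,w_2)$; this is precisely the uniqueness guaranteed by Corollary~\ref{cor:extlabelling}, so I would invoke that rather than reprove it. If at some step the set $W$ of non-edge pairs is empty, the relevant common neighbour subgraph cannot be a square, pyramid, or octahedron, so condition~(\ref{it:Maurer1}) fails and \texttt{False} is correct.

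Next I would address the final verification loop, which is what makes the algorithm a genuine \emph{decision} procedure rather than merely a labeller. The loop checks, for all pairs $u,w$, that $\size(\ell(u)-\ell(v))=\dist(u,v)$ and that every distance-two pair spans a common neighbour subgraph that is a square, pyramid, or octahedron. I would explain that these two checks together certify Maurer's conditions~(\ref{it:Maurer1}) and~(\ref{it:Maurer2}): the distance-to-symmetric-difference identity encodes the metric structure of a basis exchange graph (and in particular the positioning condition~(\ref{it:Maurer2})), while the common-neighbour-subgraph check is condition~(\ref{it:Maurer1}) verified globally. The key logical point is the \emph{equivalence}: if $G$ genuinely is a basis exchange graph, then a valid global labelling exists, the greedy extension never hits an empty $W$, and all final checks pass, so the algorithm returns $\ell$; conversely, if $G$ is not a basis exchange graph, then by Theorem~\ref{thm:Maurer} at least one of the three conditions fails, and I would trace which line of the algorithm detects that failure (the line-graph test for~(\ref{it:Maurer3}), the empty-$W$ test and the final common-neighbour check for~(\ref{it:Maurer1}), and the distance identity for~(\ref{it:Maurer2})). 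Once the final loop passes, the produced $\ell$ assigns to each node a set whose indicator vector is a vertex of $P(M)$, so $\ell$ is a valid labelling.

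For the running time I would count the dominant contributions. Computing a breadth-first layering from $v$ to obtain all distances $\dist(v,\cdot)$, and more generally all pairwise distances, costs $O(n\cdot(n+m))$ where $m=\size E$; since a basis exchange graph is regular of degree $r(n_1)$ bounded by $n$, we have $m=O(n^2)$ and all-pairs distances cost $O(n^3)$. Recognising whether the neighbourhood $N(v)$ is a line graph of a bipartite graph and recovering $L^{-1}(v)$ is done once on an induced subgraph of size at most $O(n)$, so it is dominated by the rest. In the main double loop, each node $u$ is labelled once; for each we search for a suitable $\hat v$ and for a non-edge pair in $T$, and each such search is $O(n^2)$ in the worst case (scanning candidate nodes and pairs), giving $O(n^3)$ overall since there are $O(n)$ nodes. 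The final verification loop ranges over $O(n^2)$ pairs, and for each distance-two pair it inspects the common neighbours and their induced edges, which is $O(n)$ per pair in the worst case but can be amortised, yielding $O(n^3)$. Summing, every phase is $O(n^3)$, so the total is $O(n^3)$.

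The main obstacle I anticipate is not the complexity bookkeeping but pinning down the correctness of the extension rule with full rigour: I must be sure that the formula $\ell(u)=(\ell(w_1)\cap\ell(w_2))\cup(\ell(w_1)+\ell(w_2)-\ell(\hat v))$ really reconstructs the unique correct basis at $u$ for \emph{every} admissible choice of $\hat v$ and $(w_1,w_2)$, and that such choices always exist when $G$ is a basis exchange graph. This is exactly the assertion of Corollary~\ref{cor:extlabelling} and the classification of common neighbour subgraphs in Theorem~\ref{thm:Maurer}~(\ref{it:Maurer1}), so the cleanest route is to reduce each step of the algorithm to an invocation of those two results, checking case by case (square, pyramid, octahedron) that the formula picks out the correct set. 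Handling this case analysis carefully, while showing that failure of existence is always caught by the \texttt{False} branches, is where the real work lies.
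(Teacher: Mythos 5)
Your proposal is correct and follows essentially the same route as the paper: correctness by matching each \texttt{False} branch to a violated condition of Maurer's characterisation (Theorem~\ref{thm:Maurer}), using Lemma~\ref{lem:linegraph} for the initial labelling and Corollary~\ref{cor:extlabelling} for the choice-independent extension, and the $O(n^3)$ bound by pricing each phase (all-pairs distances, line-graph recognition, the nested labelling loops, and the final verification) at $O(n^3)$ or less. The only differences are cosmetic --- you compute distances by repeated BFS where the paper uses Floyd--Warshall, and your aside that basis exchange graphs are regular is false but harmless since you only use the trivial bound $m = O(n^2)$.
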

\begin{proof} The graph with exactly one node and no edges is a basis exchange graph of a matroid. To avoid the situation that there are no edges we restrict ourselves to the situation where $G$ is a connected graph with $n>1$ nodes.

Let us first prove that Algorithm~\ref{algo:label} stops with a correct result. Our analysis relies once more on Maurer's characterisation; see Theorem~\ref{thm:Maurer}. The graph $G$ violates condition (\ref{it:Maurer3}) if the procedure terminates in line 4.
The interval condition (\ref{it:Maurer1}) guarantees, if $G$ is a basis exchange graph, that there are two nodes $x_1$, $x_2$ in the common neighbour subgraph of $u$ and $\hat v$, such that the four nodes form a square. Hence, $(x_1,x_2)$ is a nonedge and $W$ is nonempty in that case. We conclude that the interval condition is violated if the procedure terminates in line 19.
Clearly, the algorithm stops at line 24 when either a common neighbour subgraph violates (\ref{it:Maurer1}) or the labelling is incorrect.
If the graph $G$ is a basis exchange graph, then the node $v$ and its neighbourhood have a valid labelling, and by Corollary~\ref{cor:extlabelling} the labelling has a unique extension that is given by the exchange property, which is reflected in line 21. Thus, if the node labelling of $G$ is incorrect then $G$ is not a basis exchange graph of a matroid.

Our next goal is to show that $G$ is the basis exchange graph of a matroid with bases $\smallSetOf{\ell(v)}{v\in V}$ whenever the algorithm terminates in line 25. The algorithm stops in line 24 or earlier whenever $G$ violates (\ref{it:Maurer1}), and in line 4 if (\ref{it:Maurer3}) is violated.
Notice that a labelling is valid if the size of the set $\ell(v)-\ell(w)$ is one if and only if $(v,w)$ is an edge of $G$. In that case the labelling reflects distances via
\[
    \dist(u,v) = \size(\ell(v)-\ell(u)) \enspace.
\]
It is easy to see that valid labels of nodes in a square are of the form
\[
    I+a+b,\, I+a+c,\, I+b+d,\, I+c+d
\]
for some set $I$ and elements $a,b,c,d$. With the above formula for the distances we get that the algorithm stops at line 24 if (\ref{it:Maurer2}) is violated. Consequently, we conclude that the output is correct.

We complete the proof of correctness by showing that the algorithm terminates. Moreover, we show that the algorithm takes at most  $O(n^3)$ steps in a worst case scenario. For this analysis we may assume that $G$ is given as an adjacency matrix. This matrix is of size $n^2$ whenever $G$ has $n$ nodes, which is also an upper bound for the number of edges. Thus the second line takes less than $n^2$ steps.
In the  third line, the algorithm verifies whether the graph $(H,E\cap(H\times H))$  is a line graph of a graph $(B,E_B)$. 
This graph has $\size H < n$ nodes and $m < \size E < n^2$ edges. Already in 1973, Roussopoulos  \cite{Roussopoulos:1973} presented an $O(\max\{\size H, m\})$ algorithm that recognises a given line graph and reconstructs the original graph $(B,E_B)$.
This original graph $(B,E_B)$ is unique unless the line graph is a triangle; see the discussion in Section~\ref{sec:BasisExchangeGraphs}.
In this case, we choose the original graph to be the bipartite graph  $K_{1,3}$ and not the complete graph $K_3$. 
Hence lines 3--5 take $O(n^2)$ time.
Here we also need to verify whether $(B,E_B)$ is a bipartite graph; this can be done by traversing the graph, e.g., via depth-first search.
One either finds a two-colouring of its nodes, in the case that the graph is bipartite, or a cycle of odd length, if the graph is not bipartite. 
Traversing the graph also takes at most $O(n^2)$ steps. Moreover, the two colour classes $B_1$, $B_2$ form a partition of the nodes $B$ of the original graph.
We may assume that no node of $(B,E_B)$ is isolated and thus $\size B_1 \leq \size E_B = \size H < n$.
Assigning labels to $v$ and all its neighbours can be done in $O(n^2)$ time.

It is efficient to compute all pairs of nodes at distance two in advance, before using these pairs in lines 15 and 23.
There are many ways to do so, e.g., by applying Floyd and Warshall's algorithm, which runs in $O(n^3)$.
Moreover, we store additionally the distances from $v$ to all other nodes.

The two nested loops in lines 11 and 13 go around $\size B_1-1<n $ and $\size L \leq n$ times, respectively. That is, the inner loop, lines 14--21, 
 iterates at most $n^2$ times. Initialising the list $L$ in line 12 takes linear time, while taking $u$ and removing it is constant.
Finding the node $\hat v$ in line 15 can be done in $O(n)$ steps by using the distance information we computed in advance.
The set $T$, which is the intersection of two sets, can be computed in $O(n)$ steps, e.g., by iterating once through the nodes and picking those which are adjacent to $u$ and $\hat v$. The graph $G$ is not a graph of a matroid if $T$ has more than four elements. Thus we may assume that the instructions in lines 17 to 20 take constant time. Finally, computing the label $\ell(u)$ takes a linear amount of time. Thus, the inner loop is linear in $n$, and so lines 11--21 run in $O(n^3)$ time.

There are fewer than $n^2$ pairs of nodes of distance two. Hence this is an upper bound for the number of iterations  of the loop in line 22. Computing the common neighbour subgraph is essentially the same as computing $T$ in line 16. This graph has at most six nodes, thus checking if the graph is a square, a square-based pyramid, or an octahedron is constant in time. All this takes $O(n^3)$ time.
Independent of that loop, one might iterate through all pairs of nodes to verify that the labelling is correct, which can also  be done in $O(n^3)$ time. We conclude that the algorithm stops after at most $O(n^3)$ steps, with the correct result.
\end{proof}

\begin{remark}
	Note that a (valid) labelling of a basis exchange graph gives a matroid. More precisely, the vertex labels of the graph are the collection of bases of a matroid with this basis exchange graph.
	In Section~\ref{sec:BasisExchangeGraphs}, we saw that this matroid is not unique. The matroids with the same graph are characterised by Theorem~\ref{thm:recmatroids}. 
	Moreover, the indicator vectors of the labels or bases form the vertices of the matroid polytope. Thus, we might be a bit imprecise  by saying that Algorithm~\ref{algo:label} computes a polytope. It is remarkable that the dimension of the polytope is not an input parameter of the algorithm, as it is customary in the reconstruction problems (see Problems  \ref{prob:rec} and \ref{prob:class-rec}). 
\end{remark}

Strictly speaking, Algorithm~\ref{algo:label} computes the vertices of a matroid polytope from its graph. This representation of the polytope is sufficient for the analysis below.
For the sake of completeness, we wish to discuss how one could compute the facets of the polytope. This analysis requires some additional definitions. A \emph{circuit} of a matroid $M$ is a subset $C$ of the ground set such that $C$ is not a subset of a basis, but every proper subset of $C$ is contained in some basis. A \emph{cocircuit} of $M$ is a circuit of the dual matroid $M^*$. The \emph{closure} of a set $S$ in $M$ is the largest set $F\supseteq S$ with $\rank(S)=\rank(F)$. A set $S$ is \emph{closed} if it equals its closure.

Uniform matroids and hypersimplices are essential in our description of facets of matroid polytopes. A \emph{uniform matroid} $U_{r,n}$ of rank $r$ on $n$ elements has $\tbinom{n}{r}$ bases. That is, every $r$-set forms a basis.
Its matroid polytope is the \emph{hypersimplex}
\[
    \Delta(r,n) \coloneqq P(U_{r,n}) = \SetOf{x\in[0,1]^n}{\sum_{i=1}^n x_i = r}.
\]
If $M$ is another matroid of rank $r$ on $n$ elements, then its polytope $P(M)$ is a subpolytope of $\Delta(r,n)$ and
the graph $G(M)$ is a subgraph of the graph $G(U_{r,n})$  \cite{GelfandEtAl:1987}.

It follows from \cite[Proposition 7 and 13]{JoswigSchroeter:2017} that every facet of a matroid polytope of a connected matroid $M$ is induced by one of the following three inequalities:
\begin{equation}
\label{eq:facet-matroid}
x_i\geq 0,\;   x_i\leq 1,\; \text{or}    \sum_{i\in F} x_i \leq \rank(F)\;   \text{for some closed set $F$ which is a union of circuits.}
\end{equation}
Again, this means that each facet is obtained by intersecting the polytope $P(M)$ with the hyperplane derived by replacing the  inequality by the corresponding equality.  
A subpolytope of the hypersimplex is a matroid polytope if and only if its graph consists solely of edges of the hypersimplex; see \cite[Theorem 4.1]{GelfandEtAl:1987}. This implies that faces and facets of a matroid polytope are matroid polytopes. 
Moreover, the dimension of the matroid polytope $P(M)$ is the difference between the size of the ground set $E$ and the number of connected components of $M$.
It follows that the facet $\smallSetOf{x\in P(M)}{ \sum_{i\in F} x_i = \rank(F)}$ is a matroid polytope of a matroid with connected components $F$ and $E-F$; see \cite[Theorem 3.2 and 3.4]{Fujishige:1984} or \cite[Proposition 2.4 and 2.6]{FeichtnerSturmfels:2005}.

\begin{proposition}
Given a labelled exchange graph of a loop-free matroid $M$, there is an algorithm that computes all circuits of $M$ and the facets of the matroid polytope $P(M)$.
\end{proposition}

\begin{proof}
    The bases of $M$ are given as a labelling of the basis exchange graph $G$. And the ground set $E$ of the matroid $M$ is the union of all its bases as $M$ is loop-free. 
    Next we explain how to compute the circuits and closures of sets. 
    
    Let $B$ be a basis of $M$ and $e\notin B$. Then the neighbourhood $N$ of $e$ in the bipartite graph  $L_M^{-1}(B)$ is a subset of $B$. Moreover, the set $N+e$ is a circuit, because no basis of $M$ contains $N+e$ but every proper subset of it is contained in a basis. The latter follows directly from the definition of the neighbourhood subgraph of $B$ and the fact that $N$ is the neighbourhood of $e$ in the bipartite graph $L_M^{-1}(B)$. For the former, let us assume that $B'$ is a basis that contains $N+e$. We may also assume that $\size (B\cap B') = \size B -1$; otherwise exchange repeatedly elements in $B'-(N+e)$ with elements of $B$. However, in this situation a basis exchange between $B$ and $B'$ would imply that there is an edge between $e$ and some node in $E-N$, contradicting that $N$ is the neighbourhood of $e$ in $L_M^{-1}(B)$. Every circuit $C$ is of the described form for some basis $B$ with $\size (B\cap C) = \rank(C)=  \size C-1$. Thus we may enumerate all circuits of $M$ by computing the neighbourhoods of nodes in $L_M^{-1}(B)$, where $B$ varies over all bases of $M$.
    
   To obtain the closure of a set $S\subset E$, we take a basis $B$ with $\rank(S) = \size(B\cap S)$; here the rank of $S$ can be determined as the maximum size of all intersections $B' \cap S$ where $B'$ varies over all bases of $M$. Next, we compute $L^{-1}(B)$,  and then consider all elements of $E-B$ that do not share an edge with an element in $B-S$ in the bipartite graph $L_M^{-1}(B)$. These elements together with $S$ form the closure $F$ of $S$, because $\rank(F)=\size(B\cap F) = \size(B\cap S) = \rank(S)$, and $\rank(F+e)>\rank(F)$ for all $e\in E-F$. The latter follows from the fact that, for each $e\in E-F$, either $e\in B-F$, or $e\in E-B-F$ and $e$ is adjacent to at least one element in $B-F$.

    Now we verify which inequalities in \eqref{eq:facet-matroid} give rise to facets.
    Let us assume that $M$ is connected; otherwise we will apply the following to every connected component of the matroid $M$, which can be obtained from $L_M^{-1}(B)$; see
    Corollary~\ref{cor:connected}. 
    
    For each nonempty closed set $F\subsetneq E$ that is a union of circuits, we compute its rank,  and then check if $Q=\smallSetOf{x\in P(M)}{ \sum_{i\in F} x_i = \rank(F)}$ is a facet as follows. In any case, the set $Q$ is a face of the polytope $P(M)$ as the inequality $\sum_{i\in F} x_i \leq \rank(F)$ holds for all $x\in P(M)$. Now, $Q$ is the matroid polytope of a matroid $M'$ whose bases are all the bases of $M$ satisfying the additional equation. 
    For a basis $B$ of $M'$, we consider the labelled bipartite graphs $L_M^{-1}(B)$ and $L_{M'}^{-1}(B)$ corresponding to the node labelled $B$ in $M$ and $M'$, respectively. Removing the edges between the nodes in $F$ and $E-F$ in the bipartite graph $L_M^{-1}(B)$ results in the bipartite graph $L_{M'}^{-1}(B)$.
    This is because these edges are precisely those that correspond to bases $B'$ in the neighbourhood subgraph $N(B)$ that violate the condition $\size(B\cap B') = \rank(F)$. Now $Q$ is a facet if and only if $L_{M'}^{-1}(B)$ has precisely one connected component more than $L_M^{-1}(B)$.
    This is so because the number of connected components of $L_{M'}^{-1}(B)$ is the number of connected components of $M'$, which follows from \cref{cor:connected}, and thus corresponds to the codimension of $P(M')$.
    
    We continue with the inequalities $x_i\geq 0$ for $i\in E$. In this case, let us choose a node $v$ labelled by a basis $B$ which does not contain $i$. This node exists because otherwise $i$ would be a loop, but $M$ is loop-free. Similar to the previous case, the set $Q=\smallSetOf{x\in P(M)}{ x_i = 0}$ is a face of $P(M)$, as every point in $P(M)$ satisfies $x_i\ge 0$. This face is a matroid polytope of a matroid $M'$ whose bases are all bases of $M$ that do not contain $i$; in particular, this includes the basis $B$.
    As before, the bipartite graph $L_{M'}^{-1}(B)$ is obtained from the connected graph $L_M^{-1}(B)$ by removing the edges incident to $i$. The face $Q$ is a facet if the number of connected components of $L_{M'}^{-1}(B)$ is two. 
    
    Similarly, we test inequalities $x_i\leq 1$ for each $i\in E$. In this case, we choose instead a basis $B$ that contains $i$. Then we count the number of connected components of $L_{M}^{-1}(B)$ and the subgraph obtained by removing the edges incident to $i$, which corresponds to the matroid of the face $P(M)\cap\{x_i=1\}$. In this way, we are able to decide which inequalities  in \eqref{eq:facet-matroid} give rise to facets of $P(M)$.
\end{proof}

\begin{figure}
	\caption{The graph that leads to the matroid $M$ of Example~\ref{ex:Facet} and the bipartite graph $L^{-1}(135)$. From this graph one can read off the closure of the circuit $123$, which is $1234$. \label{fig:exampleFacets}}
	\footnotesize
\begin{tikzpicture}[scale = 1.2,
                    color = {black}]

  \tikzstyle{linestyle} = [preaction={draw=white, line cap=round, line width=1.5 pt}, draw=black, line cap=round, line join=round,line width=1.0 pt];
  \tikzstyle{linestyle2} = [preaction={draw=white, line cap=round, line width=1.8 pt}, myred2, line cap=round, line join=round,line width=1.5 pt];
  \tikzstyle{inner} = [black, line width=1.1 pt,fill=black];
  \tikzstyle{inner2} = [black, line width=0.2 pt,fill=white];
  \tikzstyle{node1} = [fill= white, text=myblue, draw, rectangle];
  \tikzstyle{node2} = [fill= white, draw, circle, minimum size=3mm];
  \tikzstyle{node3} = [fill= white, draw=none, circle, inner sep = 0pt];

   \coordinate (a) at (5.5, -1);
   \coordinate (d) at (4,0);
   \coordinate (b) at (7, 0);
   \coordinate (c) at (5.5, 1);

   \draw[linestyle, myblue]  (d) -- (a);
   \draw[linestyle]  (a) -- (b);
   \draw[linestyle]  (a) to[bend right=30] (c);
   \draw[linestyle, myblue]  (a) to[bend left=30] (c);
   \draw[linestyle, myblue]  (c) to[bend left=30] (b);
   \draw[linestyle]  (c) to[bend right=30] (d);
      
  \draw[inner] (a) circle (2 pt);
  \draw[inner] (b) circle (2 pt);
  \draw[inner] (c) circle (2 pt);
  \draw[inner] (d) circle (2 pt);
  
  \node[below left, node3, text=myblue] at ($0.5*(a)+0.5*(d)$) {$1$};
  \node[below right, node3] at ($0.5*(a)+0.5*(b)$) {$6$};
  \node[right, node3] at ($0.5*(b)+0.5*(d)$) {$4$};
  \node[left, node3, text=myblue] at ($0.5*(b)+0.5*(d)$) {$3$};
  \node[node3, text=myblue] at ($0.5*(b)+0.5*(c)$) {$5$};
  \node[node3] at ($0.5*(c)+0.5*(d)$) {$2$};

   \coordinate (a3) at (9.5, 1);
   \coordinate (a1) at (9.5, 0);
   \coordinate (a5) at (9.5, -1);
   \coordinate (b4) at (10.5, 1);
   \coordinate (b2) at (10.5, 0);
   \coordinate (b6) at (10.5,-1);
  
   \draw [rounded corners,fill=gray!20, draw=none] (9.2,1.3)--(10.8,1.3)--(10.8,-0.3) -- (9.2,-0.3) --cycle;
 
   \draw[linestyle2] (a1) -- (b4) -- (a3) (a1);
   \draw[linestyle, line width=1.5 pt] (a1) -- (b2);
   \draw[linestyle] (a5) -- (b6) -- (a1);

  \node[node1, draw=myred] at (a1) {$3$};
  \node[node2]  at (b2) {$4$};
  \node[node1,draw=myred]  at (a3) {$1$};
  \node[node2, draw=myred]  at (b4) {$2$};
  \node[node1]  at (a5) {$5$};
  \node[node2]  at (b6) {$6$};

\end{tikzpicture}
\end{figure}
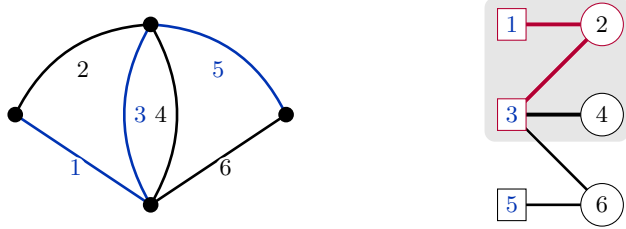

\begin{example}\label{ex:Facet} Consider the graphical matroid $M$ of the edge labelled graph that can be found on the left of Figure~\ref{fig:exampleFacets}. A basis of this matroid is the set of edges in a spanning tree.
This matroid contains the circuit $C=123$ and basis $B=135$.
The closure $F$ of $C$ is the set $1234$ and the inequality $x_1+x_2+x_3+x_4 \leq 2$ is facet-defining for the matroid polytope $P(M)$ of $M$. This can be read off the bipartite graph $L^{-1}(B)$, which is illustrated on the right of Figure~\ref{fig:exampleFacets}. You might also read off that $x_3+x_4+x_5+x_6 \leq 2$ is a facet-defining inequality by considering the circuit $356$. Besides, it can be verified that the remaining facets of $P(M)$ are given by $x_i\geq 0$ where $i\in\{3,4\}$ and  $x_i\leq 1$ where $i\in\{1,2,5,6\}$. This example also demonstrates that the circuit closure $34$ does not induce a facet, even if $x_3+x_4\leq 1$ is a valid inequality, because the induced subgraph of $L^{-1}(135)$ with nodes $1,2,5$ and $6$ is disconnected.
\end{example}

Before we return to the reconstruction problem, we finish the discussion about the facets of a matroid polytope with the following computational question.
\begin{question}
Is it possible to enumerate all facets of a matroid polytope $P(M)$ from its graph in a polynomial number of steps, measured in the number of bases of $M$?
\end{question}

\begin{theorem}\label{thm:rec-matroid-polys}
   The (abstract) vertex-edge graph of a matroid polytope determines the dimension and combinatorial type of the polytope 
   uniquely in the class of matroid polytopes.
\end{theorem}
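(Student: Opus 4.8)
The plan is to derive this statement directly from Theorem~\ref{thm:recmatroids}, which already carries out the combinatorial heavy lifting. Suppose $P(M)$ and $P(M')$ are matroid polytopes whose vertex-edge graphs are isomorphic as abstract graphs. By Theorem~\ref{thm:recmatroids}, the matroid $M'$ is obtained from $M$ by composing three kinds of operations: relabelling the ground set (a matroid isomorphism), replacing some connected components by their duals, and inserting or deleting loops and coloops. It therefore suffices to show that each of these operations preserves both the dimension and the combinatorial type of the associated matroid polytope; composing the resulting polytope isomorphisms then shows that $P(M)$ and $P(M')$ are combinatorially equivalent of the same dimension, which is precisely the asserted class reconstructibility.

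First I would dispose of the two easy operations. A matroid isomorphism is a bijection of the ground set, and the induced permutation of the coordinates of $\RR^{n}$ is a linear automorphism carrying $P(M)$ onto $P(M')$; hence the two polytopes are linearly, and in particular combinatorially, isomorphic and share the same dimension. For loops and coloops, recall that a loop $\ell$ lies in no basis, so $(e_{B})_{\ell}=0$ for every basis $B$ and $P(M)$ is contained in the hyperplane $\{x_{\ell}=0\}$; dually, a coloop $c$ lies in every basis and forces $P(M)\subseteq\{x_{c}=1\}$. In either case the map that forgets this constant coordinate is an affine isomorphism from $P(M)$ onto the matroid polytope of the matroid with the loop or coloop deleted, and an affine isomorphism preserves both dimension and combinatorial type.

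The remaining, and most delicate, operation is passing to the dual on a connected component, and here I would lean on the product structure developed in Section~\ref{sec:BasisExchangeGraphs}. Writing $M=M_{1}\oplus\cdots\oplus M_{k}$ as the direct sum of its connected components gives $P(M)=P(M_{1})\times\cdots\times P(M_{k})$, and replacing a component $M_{i}$ by its dual $M_{i}^{*}$ simply replaces the factor $P(M_{i})$ by $P(M_{i}^{*})$. The affine reflection $x\mapsto\1-x$ on $\RR^{\size{E_{i}}}$ sends $e_{B}$ to $e_{E_{i}-B}$ and thus maps $P(M_{i})$ onto $P(M_{i}^{*})$, so these two factors are combinatorially equivalent and of equal dimension. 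Since the face lattice and the dimension of a product of polytopes are determined by those of its factors, replacing factors by combinatorially equivalent ones leaves the combinatorial type and dimension of the product unchanged.

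The hard part is not any single map -- each operation is realised by an explicit linear or affine transformation -- but the bookkeeping that glues them together: one must check that combinatorial equivalence and equality of dimension are stable under forming products of polytopes and under composing the three operations. Once this is in place, Theorem~\ref{thm:recmatroids} rules out any further source of nonuniqueness, so any two matroid polytopes with isomorphic abstract graphs must agree in both dimension and combinatorial type, giving the claimed class reconstructibility.
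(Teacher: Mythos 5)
Your proposal is correct and follows essentially the same route as the paper: invoke Theorem~\ref{thm:recmatroids} and then check that each of the three ambiguities (matroid isomorphism, duality on connected components, loops/coloops) is realised by a coordinate permutation, the reflection $x\mapsto \1-x$ on a product factor, and a fixed coordinate in a hyperplane, respectively. Your write-up is somewhat more explicit than the paper's about why products and affine maps preserve dimension and combinatorial type, but the decomposition and the key maps are identical.
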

\begin{proof}
	Given the abstract vertex-edge graph of a matroid polytope or the basis exchange graph of a matroid,  we are able to reconstruct the matroid up to the three points given in Theorem~\ref{thm:recmatroids}.
	Now we will show that isomorphic matroids, matroids with dual connected components, and matroids with loops and coloops lead to matroid polytopes with the same combinatorial type.

	Clearly, an isomorphism of matroids translates into a coordinate permutation of the space where the matroid polytope lies.

	The matroid polytope of a connected matroid is mapped to the matroid polytope of the dual matroid via the coordinate transformation $x_i\mapsto 1-x_i$, and hence they have the same combinatorial type.
	The matroid polytope of a direct sum is the product of the matroid polytopes of the summands.
	This implies that matroids with dual connected components have combinatorial isomorphic matroid polytopes. 

	A loop in a matroid fixes a coordinate to be $0$, and a coloop to be $1$. In other words, in both cases the matroid polytope is embedded in a hyperplane.
	
	We conclude that the vertex-edge graph  uniquely determines the combinatorial type of its matroid-polytope.  
\end{proof}

Theorem~\ref{thm:rec-matroid-polys} proves that matroid polytopes are class reconstructible from their graphs, in the sense of Problem~\ref{prob:class-rec}, via Algorithm~\ref{algo:label}. However, matroid polytopes are neither reconstructible nor class reconstructible from their dual graphs. This is illustrated best by the matroid polytopes of uniform matroids.

Now we are ready to provide some counterexamples.
\begin{proposition}\label{prop:dualgraphs} Let $k,r,n$ be integers such that $1 \leq k < r < n$ and $k < n-r$.
	The $(k-1)$-skeleton of the polar polytope of the hypersimplex $\Delta(r,n)$ is isomorphic to the $(k-1)$-skeleton of the $n$-dimensional cross-polytope.
\end{proposition}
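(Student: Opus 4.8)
The plan is to translate the statement through polarity into a purely combinatorial comparison of two face lattices and then to exhibit an explicit order- and dimension-preserving bijection on the relevant skeleta. Write $P=\Delta(r,n)$, an $(n-1)$-dimensional polytope, and let $C$ denote the $n$-dimensional cross-polytope $\conv\SetOf{\pm e_i}{i\in[n]}$. Polarity reverses the face lattice, so a $j$-face of $P^*$ corresponds to an $(n-2-j)$-face of $P$; in particular the vertices of $P^*$ are the $2n$ facets of $P$, namely the coordinate facets $\{x_i=0\}$ and $\{x_i=1\}$. Since $k<r$ and $k<n-r$ force $2k<n$, every face of $P$ that contributes to the $k$-skeleton of $P^*$ has dimension at least $n-2-k$; under the bounds these faces are positive-dimensional, so I never meet a $0$-dimensional face of $P$, and the $k$-skeleton of $P^*$ is carried exactly by the positive-dimensional faces of $P$ of codimension at most $k+1$.

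The next step is to record the combinatorial description of the two face lattices. A nonempty face of $P$ is obtained by pinning some coordinates to $0$ and some to $1$: to each face $F$ one associates the disjoint pair $(A,B)$ with $A=\SetOf{i}{x_i\equiv 0 \text{ on } F}$ and $B=\SetOf{i}{x_i\equiv 1 \text{ on } F}$. The face $F$ is then a translate of $\Delta(r-\size B,\,n-\size A-\size B)$, so $F$ is positive-dimensional precisely when $\size A\le n-r-1$ and $\size B\le r-1$, in which case its codimension in $P$ equals $\size A+\size B$. On the other side, the faces of $C$ are the sign-consistent subsets of $\SetOf{\pm e_i}{i\in[n]}$, i.e.\ the subsets containing at most one of $e_i,-e_i$ for each $i$; such a subset is encoded by a disjoint pair $(N,Q)$ (the indices occurring with sign $-$ and with sign $+$), and it spans a face of dimension $\size N+\size Q-1$.

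With these descriptions the bijection is forced. Send a $j$-face $g$ of $P^*$ to the set of vertices of $P^*$ contained in it; under the vertex labelling $\{x_i=0\}\mapsto -e_i$ and $\{x_i=1\}\mapsto +e_i$ this set is exactly $\SetOf{-e_i}{i\in A}\cup\SetOf{+e_i}{i\in B}$, where $(A,B)$ is the pair of the face $F$ of $P$ dual to $g$. This is a sign-consistent set because $A\cap B=\emptyset$, hence a face of $C$, and its cardinality $\size A+\size B=\operatorname{codim}_P(F)=j+1$ makes it a $j$-face of $C$. The map is injective, and it preserves inclusions because containment of faces of $P^*$ corresponds to reverse containment of the faces $F$, i.e.\ to componentwise inclusion of the pairs $(A,B)$, which is precisely inclusion of the associated sign-consistent sets. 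So the map is an isomorphism of $P^*$ onto its image inside the face lattice of $C$, and it remains only to check that, restricted to the $k$-skeleta, this image is everything.

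The heart of the argument — and the step I expect to be the main obstacle — is this surjectivity at the top level $\size A+\size B=k+1$. A sign-consistent set of size at most $k+1$ corresponds to a disjoint pair $(A,B)$ with $\size A+\size B\le k+1$, and it lies in the image iff that pair is the tight representation of a positive-dimensional face, i.e.\ iff $\size A\le n-r-1$ and $\size B\le r-1$. The danger is precisely the degenerate configuration in which the chosen facets all pass through a single vertex of the hypersimplex rather than through a positive-dimensional face: this happens exactly when $\size B=r$ (all chosen facets of the form $\{x_j=1\}$) or $\size A=n-r$ (all of the form $\{x_i=0\}$), in which case the tight representation silently absorbs the remaining coordinates and the face collapses to a vertex of $P$, which polarity sends to a facet of $P^*$ outside the $k$-skeleton. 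Ruling out these degeneracies is exactly what the bounds $k<r$ and $k<n-r$ are meant to control, and the careful verification that no such pair occurs once $\size A+\size B\le k+1$ — in particular the analysis of the extremal level $\size A+\size B=k+1$ — is where I would concentrate the proof; the remaining bookkeeping of dimension counting and order preservation is routine.
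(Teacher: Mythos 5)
Your dictionary between faces of $\Delta(r,n)$ and disjoint pairs $(A,B)$, and the order-preserving injection from the $k$-skeleton of $\Delta(r,n)^*$ into the face lattice of the cross-polytope, are correct and follow essentially the same route as the paper, which likewise realises each face of the hypersimplex as a copy of $\Delta(r-\ell,n-m)$ cut out by facets of the cube. The problem is the step you explicitly postpone: verifying that every sign-consistent pair with $\size A+\size B\le k+1$ satisfies $\size B\le r-1$ and $\size A\le n-r-1$. That step is not deferred bookkeeping --- it is the entire content of the statement, and it cannot be carried out under the stated bounds. From $k<r$ you only get $k+1\le r$, so the pair $(A,B)=(\emptyset,B)$ with $\size B=k+1=r$ is admissible: it is sign-consistent of size $k+1$, but the corresponding facets $\{x_i=1\}$, $i\in B$, of $\Delta(r,n)$ meet in the single vertex $e_B$, a face of dimension $0$ rather than of codimension $k+1$, so the corresponding $k$-face of the cross-polytope has no preimage and surjectivity fails. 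Concretely, take $(k,r,n)=(1,2,5)$, which satisfies $0\le k<r\le n$ and $k<n-r$: in the dual graph of $\Delta(2,5)$ the five tetrahedral facets $\{x_i=1\}$ are pairwise non-adjacent (any two of them meet in a vertex, not a ridge), so that graph has $30$ edges and five pairwise non-adjacent nodes, whereas the graph of the $5$-dimensional cross-polytope has $40$ edges and no three pairwise non-adjacent nodes. Hence no argument can close your gap: the conclusion genuinely needs the stronger bounds $k+1<r$ and $k+1<n-r$, under which your map is indeed a bijection on the $k$-skeleta and the rest of your proof goes through.

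You should also know that the paper's own proof stumbles at exactly this point: it concludes that for every $m<\max\{r,n-r\}$ the intersection of $m$ facets of $\Delta(r,n)$ is an $(n-m-1)$-face, whereas the inequalities it derives ($\ell<r$ and $r-\ell<n-m$) must hold for every split $\ell+(m-\ell)=m$ and therefore only yield this for $m<\min\{r,n-r\}$; the skeleton claim needs it for $m=k+1$, which the hypothesis $k<\min\{r,n-r\}$ does not supply. So your instinct that the degenerate configuration is the main obstacle was exactly right; the flaw in your proposal is stopping there instead of discovering that the obstacle is real and forces a correction of the hypotheses (and of Corollary~\ref{cor:rec-dual-matroid-polys}, which inherits the problem, e.g.\ for $r=2$ and $n\ge 6$).
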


\begin{proof} The hypersimplex $\Delta(r,n)$ is the intersection of the hyperplane $\sum_{i=1}^n x_i = r$ with the $n$-dimensional $0/1$-cube, and all its facets are induced by those of the cube. Now, let $F$ be an $(n-m)$-dimensional face of that cube. Without loss of generality, we may assume that 
	\[F = F_1\cap \cdots \cap F_\ell \cap F_{\ell+1} \cap \cdots \cap F_m,\] where
	$F_i = \SetOf{x\in[0,1]^n}{x_i=1}$ for $i\leq\ell$ and
	$F_i = \SetOf{x\in[0,1]^n}{x_i=0}$ for $\ell < i\leq m$.
	
	The face $F$ is an $(n-m)$-dimensional $0/1$-cube, and $F$ intersected with $\sum x_i = r$ is isomorphic to the hypersimplex $\Delta(r-\ell,n-m)$. This is an $(n-m-1)$-dimensional polytope, unless $r=\ell$ or $m+r=n+\ell$. In these two cases, the hypersimplex is  a single point.
	It follows that the intersection of the facets $F_i\cap \smallSetOf{x\in\RR^n}{\sum x_i=r}$ of the hypersimplex forms the $(n-m-1)$-face $F\cap \smallSetOf{x\in\RR^n}{\sum x_i=r}$,  whenever $\ell<r$ and $r-\ell<n-m$. Clearly $\ell\leq m$ and $r-\ell\leq r$. Hence, for every $m<\min\{r,n-r\}$,  the intersection of $m$ of these facets yields a $(n-m-1)$-face of $\Delta(r,n)$.
	
From the observation that every face of the hypersimplex is of that form, follows that the codimension-$k$ faces of $\Delta(r,n)$  are in bijection with the codimension-$k$ faces of the $n$-dimensional cube, whenever $0\leq k\leq m$. Moreover, this induces an inclusion-preserving bijection between the faces of the hypersimplex and those of the cube whose codimension is at most $m$. As the $n$-dimensional cube is the polar of the $n$-dimensional cross-polytope;  our claim is now established.
\end{proof}

The next corollary makes explicit what we just showed in Proposition \ref{prop:dualgraphs}, namely that hypersimplicies, and therefore also matroid polytopes, are not class-reconstructible from their dual graphs.
\begin{corollary}\label{cor:rec-dual-matroid-polys}
The dual graphs of the $(n-1)$-dimensional polytopes $\Delta(r,n)$ and $\Delta(r+1,n)$ are isomorphic whenever $n-2 > r > 2$.
\end{corollary}

We proceed with a discussion on the reconstructibility of matroid polytopes as in Problem~\ref{prob:rec}. We begin this discussion with the following classical example that we mentioned in Section~\ref{sec:intro-rec}.
\begin{example}
	Consider the cyclic polytope of dimension $d\geq 4$ and $n>d$ vertices. Its vertex-edge graph is the complete graph with $n$ nodes \cite[Chapter 12]{Gruenbaum:2003}. This graph is also the vertex-edge graph of an $(n-1)$-dimensional simplex, which is the hypersimplex $\Delta(1,n)$, and thus the matroid polytope of an uniform matroid on $n$ elements of rank $1$. 
\end{example}

\begin{figure}[t!]
	\caption{Schlegel projections of the two $4$-polytopes $P(M)$ and $Q$ of Example~\ref{ex:counterexample}. 
	On the right, the nonmatroidal facet of $Q$ is exposed. } 
\label{fig:counterexample}
\begin{subfigure}[t]{0.45\textwidth}
    \centering
    \subcaption*{A Schlegel projection of $P(M)$.}

\begin{tikzpicture}[x  = {(-0.7cm,-0.6cm)},
                    y  = {(-0.0cm,0.5cm)},
                    z  = {(0.7cm,-0.6cm)},
                    scale = 3.5,
                    color = {lightgray}]


  \tikzstyle{pointcolor_p} = [fill=black]
  \tikzstyle{pointcolor_x} = [fill=myred]
  
  \coordinate (v0_q) at (0.5, 0.7, 0.5);
  \coordinate (v1_q) at (1, 1, 0);
  \coordinate (v2_q) at (0.5, 0.7, 0.1);
  \coordinate (v3_q) at (0.5, 0.3, 0.5);
  \coordinate (v4_q) at (1, 0, 0);
  \coordinate (v5_q) at (0, 1, 1);
  \coordinate (v6_q) at (0.1, 0.7, 0.5);
  \coordinate (v7_q) at (0, 1.1, 0);
  \coordinate (v8_q) at (0, 0, 1);

  \tikzstyle{linestyle_x} = [preaction={draw=white, line cap=round, line width=2.5 pt}, line cap=round, line join=round, color=myred,  line width=1]
  \tikzstyle{linestyle_p} = [preaction={draw=white, line cap=round, line width=2.5 pt}, line cap=round, line join=round, color=black, ultra thick]

  \draw[linestyle_p] (v5_q) -- (v1_q);
  \draw[linestyle_x] (v1_q) -- (v0_q);
  \draw[linestyle_x] (v2_q) -- (v0_q);
  \draw[linestyle_x] (v2_q) -- (v1_q);
  \draw[linestyle_x] (v3_q) -- (v0_q);
  \draw[linestyle_x] (v3_q) -- (v2_q);
  \draw[linestyle_p] (v4_q) -- (v1_q);
  \draw[linestyle_x] (v4_q) -- (v2_q);
  \draw[linestyle_x] (v4_q) -- (v3_q);
  \draw[linestyle_x] (v5_q) -- (v0_q);
  \draw[linestyle_x] (v6_q) -- (v0_q);

  \fill[pointcolor_x] (v0_q) circle (0.8 pt);
  \node at (v0_q) [text=black, inner sep=6pt, above, draw=none, align=left] {\footnotesize $34$};

  \draw[linestyle_x] (v6_q) -- (v2_q);
  \draw[linestyle_x] (v6_q) -- (v3_q);
  \draw[linestyle_x] (v6_q) -- (v5_q);
  \draw[linestyle_p] (v7_q) -- (v1_q);

  \fill[pointcolor_p] (v1_q) circle (1 pt);
  \node at (v1_q) [text=black, inner sep=4pt, above left, draw=none, align=left] {\footnotesize $24$};

  \draw[linestyle_x] (v7_q) -- (v2_q);

  \fill[pointcolor_x] (v2_q) circle (0.8 pt);
  \node at (v2_q) [text=black, inner sep=4pt, above left, draw=none, align=left] {\footnotesize $23$};

  \draw[linestyle_p] (v7_q) -- (v4_q);
  \draw[linestyle_p] (v7_q) -- (v5_q);
  \draw[linestyle_x] (v7_q) -- (v6_q);
  \draw[linestyle_x] (v8_q) -- (v3_q);

  \fill[pointcolor_x] (v3_q) circle (0.8 pt);
  \node at (v3_q) [text=black, inner sep=6pt, below, draw=none, align=left] {\footnotesize $35$};

  \draw[linestyle_p] (v8_q) -- (v4_q);

  \fill[pointcolor_p] (v4_q) circle (1 pt);
  \node at (v4_q) [text=black, inner sep=4pt, below left, draw=none, align=left] {\footnotesize $25$};

  \draw[linestyle_p] (v8_q) -- (v5_q);

  \fill[pointcolor_p] (v5_q) circle (1 pt);
  \node at (v5_q) [text=black, inner sep=4pt, above right, draw=none, align=left] {\footnotesize $14$};

  \draw[linestyle_x] (v8_q) -- (v6_q);

  \fill[pointcolor_x] (v6_q) circle (0.8 pt);
  \node at (v6_q) [text=black, inner sep=4pt, above right, draw=none, align=left] {\footnotesize $13$};

  \draw[linestyle_p] (v8_q) -- (v7_q);

  \fill[pointcolor_p] (v8_q) circle (1 pt);
  \node at (v8_q) [text=black, inner sep=4pt, below right, draw=none, align=left] {\footnotesize $15$};
  \fill[pointcolor_p] (v7_q) circle (1 pt);
  \node at (v7_q) [text=black, inner sep=5pt, above, draw=none, align=left] {\footnotesize $12$};

\end{tikzpicture}
\end{subfigure}
\begin{subfigure}[t]{0.45\textwidth}
    \centering
    \subcaption*{A Schlegel projection of $Q$.}

\begin{tikzpicture}[x  = {(-0.7cm,-0.6cm)},
                    y  = {(-0.0cm,0.5cm)},
                    z  = {(0.7cm,-0.6cm)},
                    scale = 3.5,
                    color = {lightgray}]


  \tikzstyle{pointcolor_p} = [fill=black]
  \tikzstyle{pointcolor_x} = [fill=myred]
  
  \coordinate (v0_q) at (0.5, 0.7, 0.5);
  \coordinate (v1_q) at (1, 1, 0);
  \coordinate (v2_q) at (0.75, 0.6, 0.25);
  \coordinate (v3_q) at (0.5, 0.3, 0.5);
  \coordinate (v4_q) at (1, 0, 0);
  \coordinate (v5_q) at (0, 1, 1);
  \coordinate (v6_q) at (0.1, 0.7, 0.5);
  \coordinate (v7_q) at (0, 1.1, 0);
  \coordinate (v8_q) at (0, 0, 1);

  \tikzstyle{linestyle_x} = [preaction={draw=white, line cap=round, line width=2.5 pt}, line cap=round, line join=round, color=myred3, line width=1, dotted]
  \tikzstyle{linestyle_p} = [preaction={draw=white, line cap=round, line width=2.5 pt}, line cap=round, line join=round, color=myred4, ultra thick, dotted]

  \tikzstyle{linestyle_xf} = [preaction={draw=white, line cap=round, line width=2.5 pt}, line cap=round, line join=round, color=myred, line width=1]
  \tikzstyle{linestyle_pf} = [preaction={draw=white, line cap=round, line width=2.5 pt}, line cap=round, line join=round, color=black, ultra thick]

  \draw[linestyle_pf] (v5_q) -- (v1_q);
  \draw[linestyle_xf] (v1_q) -- (v0_q);
  \draw[linestyle_xf] (v2_q) -- (v0_q);
  \draw[linestyle_xf] (v2_q) -- (v1_q);
  \draw[linestyle_xf] (v3_q) -- (v0_q);
  \draw[linestyle_xf] (v3_q) -- (v2_q);
  \draw[linestyle_pf] (v4_q) -- (v1_q);
  \draw[linestyle_xf] (v4_q) -- (v2_q);
  \draw[linestyle_xf] (v4_q) -- (v3_q);
  \draw[linestyle_xf] (v5_q) -- (v0_q);
  \draw[linestyle_x] (v6_q) -- (v0_q);

  \fill[pointcolor_x] (v0_q) circle (0.8 pt);
  \node at (v0_q) [text=black, inner sep=4pt, below left, draw=none, align=left] {\footnotesize $34$};

  \draw[linestyle_x] (v6_q) -- (v2_q);
  \draw[linestyle_x] (v6_q) -- (v3_q);
  \draw[linestyle_x] (v6_q) -- (v5_q);
  \draw[linestyle_p] (v7_q) -- (v1_q);

  \fill[pointcolor_p] (v1_q) circle (1 pt);
  \node at (v1_q) [text=black, inner sep=4pt, above left, draw=none, align=left] {\footnotesize $24$};

  \draw[linestyle_x] (v7_q) -- (v2_q);

  \fill[pointcolor_x] (v2_q) circle (0.8 pt);
  \node at (v2_q) [text=black, inner sep=6pt, below, draw=none, align=left] {\footnotesize $w$};

  \draw[linestyle_p] (v7_q) -- (v4_q);
  \draw[linestyle_p] (v7_q) -- (v5_q);
  \draw[linestyle_x] (v7_q) -- (v6_q);
  \draw[linestyle_xf] (v8_q) -- (v3_q);

  \fill[pointcolor_x] (v3_q) circle (0.8 pt);
  \node at (v3_q) [text=black, inner sep=6pt, below, draw=none, align=left] {\footnotesize $35$};

  \draw[linestyle_pf] (v8_q) -- (v4_q);

  \fill[pointcolor_p] (v4_q) circle (1 pt);
  \node at (v4_q) [text=black, inner sep=4pt, below left, draw=none, align=left] {\footnotesize $25$};

  \draw[linestyle_pf] (v8_q) -- (v5_q);

  \fill[pointcolor_p] (v5_q) circle (1 pt);
  \node at (v5_q) [text=black, inner sep=4pt, above right, draw=none, align=left] {\footnotesize $14$};

  \draw[linestyle_x] (v8_q) -- (v6_q);

  \fill[pointcolor_x, myred3] (v6_q) circle (0.8 pt);
  \node at (v6_q) [text=black, inner sep=4pt, above right, draw=none, align=left] {\footnotesize $13$};

  \draw[linestyle_p] (v8_q) -- (v7_q);

  \fill[pointcolor_p] (v8_q) circle (1 pt);
  \node at (v8_q) [text=black, inner sep=4pt, below right, draw=none, align=left] {\footnotesize $15$};
  \fill[pointcolor_p, myred4] (v7_q) circle (1 pt);
  \node at (v7_q) [text=black, inner sep=5pt, above, draw=none, align=left] {\footnotesize $12$};

\end{tikzpicture}
\end{subfigure}
\end{figure}
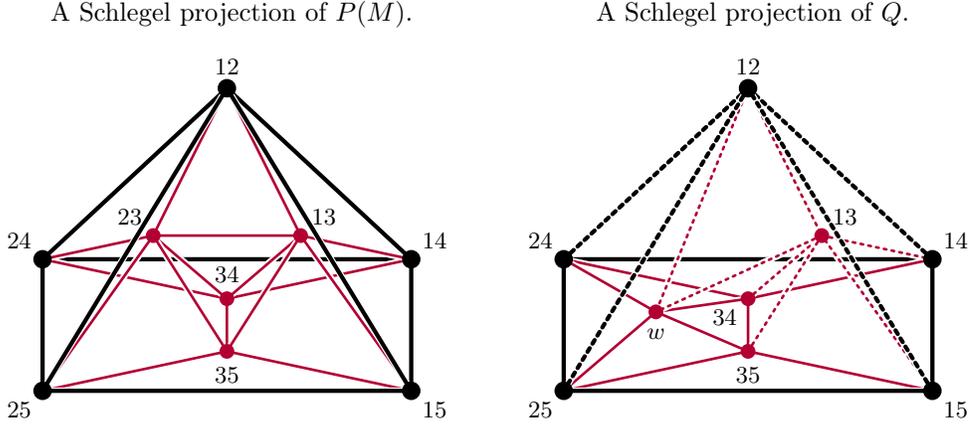

However, the next example illustrates that matroid polytopes are not reconstructible from their graphs even if if the dimension is given, i.e., they are not reconstructible in the sense of Problem~\ref{prob:rec}.

\goodbreak
\begin{example}\label{ex:counterexample} Consider the matroid polytope $P(M)$ of the rank-two matroid $M$ on five elements with the nine bases
\[
12,\,13,\,14,\,15,\,23,\,24,\,25,\,34,\,35.
\]
That is, the elements $4$ and $5$ are parallel. Let us denote by $v_{ij} = e_i+e_j$ the vertices of $P(M)$.
The polytope $P(M)$ has nine facets, three tetrahedra, three  square-based pyramids, one octahedra, and one prism $T$ over a triangle. This prism is the convex hull of the six points $v_{14}$, $v_{15}$, $v_{24}$, $v_{25}$, $v_{34}$ and $v_{35}$.
A Schlegel projection of the polytope is shown in Figure~\ref{fig:counterexample}.

Now consider the $4$-polytope $Q$ which is derived from $P(M)$ by replacing the vertex $v_{23}= (0,1,1,0,0)$ by the point $w = (-\tfrac{1}{2},\tfrac{1}{2},1,\tfrac{1}{2},\tfrac{1}{2})$, i.e., $Q$ is the convex hull of the points $v_{12}$, $v_{13}$, $v_{14}$, $v_{15}$, $v_{24}$, $v_{25}$, $v_{34}$, $v_{35}$ and $w$. The point $w$ is a vertex of $Q$ and lies in the affine hull of the prism $T$ of $P(M)$. These seven points form a nonmatroidal facet of $Q$.
All together, the polytope $Q$ has nine vertices, and ten facets: three tetrahedra, six square-based pyramids, and the aforementioned nonmatroidal facet. A Schlegel projection is shown in Figure~\ref{fig:counterexample}.

It is straight forward to check that the two polytopes $P(M)$ and $Q$ have isomorphic graphs.
\end{example}

The above example is minimal in the sense that matroid polytopes with eight or fewer vertices are reconstructible from their graphs and dimension. This can be  verified by checking all polytopes with eight or fewer vertices \cite{MiyataMoriyamaFukuda:2013}. 
We end this article with a last example that shows that hypersimplices are not reconstructible from their graphs, and with two related open questions.
\begin{example}\label{ex:counterexamplehypersimplex} Consider the hypersimplex $\Delta(2,5)$, which is a 4-dimensional polytope with ten vertices and ten facets. Five of these facets are tetrahedra and the other five are octahedra.

Let $Q$ be the polytope that is derived from $\Delta(2,5)$ by pushing the vertex $(1,1,0,0,0)$ in direction of the centre, say to the position  $w=(\tfrac{5}{8},\tfrac{5}{8},\tfrac{1}{4},\tfrac{1}{4},\tfrac{1}{4})$. Then, three of the octahedral facets of $\Delta(2,5)$ break into two square-based pyramids each. Thus, the polytope $Q$ is the convex hull of ten points. It has  $13$ facets: 
five tetrahedra, six square-based  pyramids, and two octahedra. It is not a matroid polytope.
Besides, the graphs of $\Delta(2,5)$ and $Q$ are isomorphic.
\end{example}

From Example~\ref{ex:counterexamplehypersimplex}, we derive the following statement.
\begin{proposition}\label{prop:counterexample} Hypersimplices, and hence matroid polytopes, are not reconstructible from their graphs and dimension.
\end{proposition}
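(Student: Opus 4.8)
The plan is to prove Proposition~\ref{prop:counterexample} by exhibiting, for each relevant hypersimplex, a non-matroidal polytope of the same dimension whose graph is isomorphic to that of the hypersimplex. Example~\ref{ex:counterexamplehypersimplex} already furnishes such a pair for $\Delta(2,5)$: the polytope $Q$ obtained by pushing a single vertex toward the centre. So the core of the argument is to verify the two claims made in that example — that $Q$ is genuinely not a matroid polytope, and that $G(Q)\cong G(\Delta(2,5))$ — and then to observe that this suffices.

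First I would establish the graph isomorphism. Pushing the vertex $(1,1,0,0,0)$ slightly toward the barycentre to the point $w$ does not destroy any edge of $\Delta(2,5)$ incident to that vertex, nor does it create new ones, provided the perturbation is small enough to keep $w$ in the relative interior of the union of the stars of the faces containing the original vertex. Concretely, I would check that the edges of $\Delta(2,5)$ at $(1,1,0,0,0)$ (the vertices $e_i+e_j$ differing in exactly one coordinate, as recorded by the basis-exchange adjacency of Section~\ref{sec:BasisExchangeGraphs}) remain edges of $Q$, and that no pair of the other nine vertices becomes newly adjacent; since $w$ lies in the interior of the relevant vertex figure, the combinatorics of adjacency is preserved even though three octahedral facets subdivide. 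This gives $G(Q)\cong G(\Delta(2,5))$ by the identity map on vertices.

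Next I would show $Q$ is not a matroid polytope. The cleanest route is the classical characterisation (see \cite{GelfandEtAl:1987}): a $0/1$-polytope is a matroid polytope if and only if every edge is a translate of a difference $e_i-e_j$ of standard basis vectors. The pushed point $w=(\tfrac58,\tfrac58,\tfrac14,\tfrac14,\tfrac14)$ is not a $0/1$-vector, so $Q$ has a non-$0/1$ vertex and therefore cannot be a matroid polytope; equivalently, as noted in the example, three octahedral facets each split into two Egyptian pyramids, and a pyramid over a triangle on these vertex coordinates is not a matroid-polytope face. Either observation immediately disqualifies $Q$. I would then invoke that $\Delta(2,5)$ is the matroid polytope $P(U_{2,5})$ while $Q$ is not a matroid polytope, yet they share a graph and a dimension, which is precisely the failure of reconstructibility in the sense of Problem~\ref{prob:rec}. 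Since hypersimplices are matroid polytopes, the same example shows matroid polytopes are not reconstructible from their graphs and dimension, proving both assertions of the proposition.

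I expect the main obstacle to be the rigorous verification that the graph is genuinely unchanged under the perturbation — specifically, ruling out any accidental new adjacency or lost adjacency among the nine unperturbed vertices. This is a finite but slightly delicate check: one must confirm that $w$ stays strictly inside the appropriate region so that exactly the edges incident to the old vertex survive and no new edge appears between previously non-adjacent vertices. The facet-counting ($13$ facets: five tetrahedra, six pyramids, two octahedra) is a useful consistency check but is not strictly needed for the graph claim; the essential point is the local combinatorial invariance of the $1$-skeleton under a small generic push, which I would make precise by describing the vertex figure of $(1,1,0,0,0)$ in $\Delta(2,5)$ and noting that $w$ remains beyond every facet not containing the original vertex and beneath every facet containing it.
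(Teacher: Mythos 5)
Your overall strategy---take the pair $\bigl(\Delta(2,5),Q\bigr)$ from Example~\ref{ex:counterexamplehypersimplex}, verify the graph isomorphism, and conclude---is the same as the paper's, and the first half of your plan (checking that the push of $(1,1,0,0,0)$ to $w$ neither destroys nor creates edges) is sound. The gap is in the second half. You set yourself the goal of showing that $Q$ \emph{is not a matroid polytope}, and you then assert that one polytope being a matroid polytope and the other not, with equal graphs and dimensions, ``is precisely the failure of reconstructibility in the sense of Problem~\ref{prob:rec}.'' It is not. Reconstructibility in the sense of Problem~\ref{prob:rec} concerns determination of the \emph{face lattice}, and the paper explicitly works up to combinatorial equivalence. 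A counterexample therefore must consist of two $4$-polytopes with isomorphic graphs and \emph{non-isomorphic face lattices}. The observation that $w=(\tfrac58,\tfrac58,\tfrac14,\tfrac14,\tfrac14)$ is not a $0/1$-vector only shows that $Q$ is not literally a matroid polytope as a geometric object; it does not exclude the possibility that $Q$ is combinatorially equivalent to $\Delta(2,5)$, in which case the pair would witness nothing. (To see that your criterion cannot suffice: scaling $\Delta(2,5)$ by $2$ also produces a non-$0/1$ polytope with the same graph and dimension, yet it is no counterexample to anything.)

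The fact you dismiss as ``a useful consistency check but not strictly needed''---that $Q$ has $13$ facets (five tetrahedra, six Egyptian pyramids, two octahedra) while $\Delta(2,5)$ has $10$ (five tetrahedra, five octahedra)---is exactly the step that establishes combinatorial inequivalence, and hence the proposition; this is what the paper's example supplies. Your fallback remark is also incorrect on its own terms: a square (Egyptian) pyramid \emph{is} a matroid polytope and can occur as a facet of one---indeed $P(M)$ in Example~\ref{ex:counterexample} has three Egyptian-pyramid facets---so the appearance of pyramidal facets does not by itself disqualify $Q$. Replace the ``not a matroid polytope'' target with the facet-count comparison and the proof closes correctly: $\Delta(2,5)$ and $Q$ are $4$-polytopes with isomorphic graphs and non-isomorphic face lattices, and since $\Delta(2,5)=P(U_{2,5})$ is both a hypersimplex and a matroid polytope, both assertions of the proposition follow.
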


We saw that matroid polytopes and hypersimplices are not reconstructible from their graphs, i.e., their $1$-skeleta. It would be interesting to find answers to the following questions.
\begin{question} What is the smallest $k$ depending on $r$ and $n$ such that the hypersimplex $\Delta(r,n)$ of dimension $n-1$ is reconstructible from its $k$-skeleton and dimension? 
\end{question}
 
\begin{question} What is the smallest $k$ such that every $n$-dimensional matroid polytope (of rank $r$) is reconstructible from its $k$-skeleton and dimension? 
\end{question}

\section*{Acknowledgements}
\noindent
The authors thank Michael Joswig for his comments about the reconstruction of cubical polytopes. Furthermore, they are grateful for the detailed feedback from two anonymous referees that helped to improve the manuscript. 
The second author is supported by the Knut and Alice Wallenberg Foundation, and also thanks Federation University Australia for the hospitality during a two-week stay.

\bibliographystyle{alpha}
\bibliography{References}

\end{document}